\date{\today}
\numberwithin{equation}{section}%
\newtheorem{theorem}{Theorem}[section]
\newtheorem{proposition}{Proposition}[section]
\newtheorem{definition}{Definition}[section]
\newtheorem{corollary}{Corollary}[section]
\theoremstyle{definition}
\newtheorem{remark}{Remark}[section]
\DeclareMathOperator{\grad}{grad}
\newcommand{\eps}{\varepsilon}
\newcommand{\R}{{\mathbb R}}
\newcommand{\Id}{\mbox{Id}}
\renewcommand{\r}[1]{(\ref{#1})}
\newcommand{\be}[1]{\begin{equation}\label{#1}}
\newcommand{\ee}{\end{equation}}
\renewcommand{\d}{\mathrm{d}}
\newcommand{\bo}{\partial M}
\newtheorem{example}{Example}
\title[The Lorentzian scattering rigidity problem]{The Lorentzian scattering rigidity problem\\
and rigidity of stationary metrics}
\author[P. Stefanov]{Plamen Stefanov}
\address{Department of Mathematics, Purdue University, West Lafayette, IN 47907}
\thanks{P.S. partly supported by  NSF  Grant DMS-2154489}
\begin{document}
\begin{abstract}
We study  scattering rigidity  in Lorentzian geometry: recovery of a Lorentz\-ian metric from the scattering relation $\mathcal S^\sharp$ known on a lateral boundary.  We show that, under a non-conjugacy assumption,  every defining function $r(x,y)$ of the submanifold of pairs of boundary points which can be connected by a lightlike geodesic plays the role of the boundary distance function in the Riemannian case in the following sense. Its linearization is the light ray transform of tensor fields of order two which are the perturbations of the metric. Next, we study scattering rigidity of stationary metrics in time-space cylinders and show that it can be reduced to boundary rigidity of magnetic systems on the base; a problem studied previously. This implies several scattering rigidity results for stationary metrics. 
\end{abstract} 
\maketitle

\section{Introduction} 
We study the scattering relation $\mathcal S^\sharp$ for Lorentzian manifolds with spacelike or timelike boundaries. The main question we are interested in is whether one can recover the metric $g$, up to some group of explicit gauge transformations, given $\mathcal S^\sharp$. Our convention is that $\mathcal S^\sharp$ acts on projections of lightlike  \textit{covectors} on the boundary; for its vector version we use the notation $\mathcal{S}$ but our main interest is in $\mathcal S^\sharp$.  While the question in this generality is still unanswered, and most likely requires assumptions beyond the ones we require for Riemannian metrics, we concentrate on the following three problems.

\smallskip
\textbf{(A)} Suppose we want to solve it by linearization near a background metric $g$. In fact, some of the strongest results in the Riemannian case are obtained this way. In the Riemannian case, a linearization of the boundary distance function $\rho$ leads to the inversion of the geodesic X-ray transform of tensor fields
\be{3}
Xf(x,v)= \int \left\langle f ,\dot\gamma_{x,v}(t) \otimes \dot\gamma_{x,v}(t)\right\rangle\d t, 
\ee
where the two-tensor $f$ is the perturbation of $g$. We want to show that $Xf=0$ implies that $f$ is potential (see next section), which linearizes the diffeomorphism invariance of the scattering rigidity problem. Moreover, we want to prove a stability estimate allowing us to treat the nonlinearity, see, e.g., \cite{SU-JFA09}. 

If we linearize $\mathcal S^\sharp$ instead, we get a not so simple looking formula. We still get a geodesic X-ray transform of symmetric tensor fields but of $\nabla f$, with a weight, plus a zeroth order term, see \cite{SU-MRL, SUV_localrigidity, SUV_anisotropic}. The appearance of derivatives is not surprising in view of \r{1} below or by the fact that the generator of the geodesic flow contains first derivatives of the metric. The simple looking \r{3} is more attractive for analysis however, especially in the Euclidean case where one can use the Fourier transform, see, e.g., \cite{Sh-book}. On the other hand, at least for simple manifolds, $\mathcal S^\sharp$ and $\rho$ determine each other but $\rho$ has the advantage of being scalar, and as we said, with a  simpler linearization. 

There is no obvious extension of the boundary distance function \textit{for this purpose} in the Lorentzian case even though distance/separation functions have been defined. In fact, the distance between two points that can be connected by a lightlike geodesic is zero, and if they cannot, one can define a separation function but that depends on more than the lightlike rays only.

We propose, for Lorentzian manifolds with spacelike or timelike boundaries,  an equivalent to the boundary distance function which can serve as a generating function for the scattering relation and which linearization about a fixed metric leads to the light ray X-ray transform of tensor fields of order two 
\be{LRT}
Lf(x,v)= \int \left\langle f ,\dot\gamma_{x,v}(t) \otimes \dot\gamma_{x,v}(t)\right\rangle\d t,
\ee
which looks formally like \r{3} but the background metric is a Lorentzian one, and we integrate over lightlike geodesics only. 
This defining function can be taken, in fact, to be a defining function of the boundary pairs $(x,y)$ which can be connected by a lightlike geodesic, and one way to construct it is through the energy functional. This legitimizes the interest in the Lorentzian version $L$ of $X$.

\smallskip
\textbf{(B)} 
Is $L$ invertible, up to some ``natural'' linear space which we expect to be its kernel? Eventually, we want to be stably invertible, which is not true because timelike singularities are invisible for $L$ \cite{MR1004174, LOSU-strings, LOSU-Light_Ray}. For some classes of $f$, that might still be true. We expect that  $Lf=0$ with $f\in C_0^\infty$ at least, implies that $f$ is a sum of a potential field and one conformal to $g$. This is the expectation based on the linearization of the (known so far) non-uniqueness of ${\mathcal S}^\sharp$. 
This kind of injectivity is proved  in \cite{Lauri-Light-21} when  $g$ is conformal to $\d t^2-h(x, \d x)$ with $h$ Riemannian, under the assumption that for $X_h$ related to $h$, we have injectivity modulo potential fields, even for tensor fields of orders $m>2$. This implies the same result for $g$ Minkowski of course.  

\smallskip
\textbf{(C)} We prove next scattering rigidity for one of those special cases where we expect it to hold: for stationary metrics, under some additional geometric conditions (after all, conditions are needed even in the Riemannian case). We use the observation made in \cite{Germinario_07} that once one projects the lightlike geodesics on the spatial base, one gets a magnetic system in space. Boundary rigidity for magnetic systems was studied in \cite{St-magnetic}, see also \cite{Yernat-Hanming-15,Hanming-18}. The equivalent to the boundary distance function there was taken to be the boundary action function $\mathbb A$, see also Appendix~\ref{app}. We show that $\mathbb A$  appears naturally when we reduce the Lorentzian scattering relation to knowledge of a defining function of pairs of boundary points which can be connected by a lightlike geodesic; and then project to the spatial base. Then all rigidity results in \cite{St-magnetic} apply and imply rigidity for stationary metrics. 

\smallskip
The boundary and the lens/scattering rigidity of Riemannian manifolds have rich history. It goes back to 1905 and 1907,  when Herglotz \cite{Herglotz} and Wiechert and Zoeppritz \cite{WZ} resolved the conformal spherically symmetric case motivated by seismology. The conformal case for simple metrics was solved in \cite{Mu2} and \cite{MuRo}. Further results can be found in \cite{Croke90, Otal, PestovU, bernstein1980conditions, Gromov, SU-MRL, CDS, LassasSU, Burago-Ivanov, SU-JAMS, SUV_localrigidity, SUV_anisotropic}. The lens rigidity problem, more appropriate for non-simple geometries is studied in \cite{Croke04, Michel, SU-lens, Croke_scatteringrigidity, Colin14}. 

Inversion of the geodesic X-ray transform on tensors on Riemannian manifolds has been well studied as well, see \cite{Denisjuk_06,Sh-book,S-AIP,S-Serdica,SU-AJM,SU-Kawai,SU-Duke,SUV-tensors,Monard-2016,UV:local} generalizing its version on functions. 

There are no so many results for Lorentzian geometry. The author and Yang \cite{St-Yang-DN} showed the  scattering relation appears as the canonical relation of the associated Dirichlet-to-Neumann map, which is an FIO. A linearization of the scattering rigidity problem from a spacelike to a spacelike hypersurface was studied in \cite{LOSU-strings} motivated by a problem in cosmology. It was shown that microlocally, a vanishing linearization implies that the perturbation is a sum of a conformal tensor plus a potential part, indeed. The recent paper \cite{UhlmannYangZhou} studies recovery of stationary metrics from the time separation function under an additional condition on the form $\omega$, see section~\ref{sec_stat}, but the data there uses information coming from not lightlike geodesics only. Of course, any result about rigidity of Riemannian manifold $(N,h)$ implies rigidity for the Lorentzian one $M=\R_t\times N_x$ with $g=-\d t^2+h(x,\d x)$. Microlocal study of the light ray transform on functions was done in \cite{LOSU-Light_Ray}. 

Light ray transform results in the Lorentzian setting exist as well: \cite{MR1004174, S-support2014,Feizmohammadi2019, wang2017parametrices, LOSU-Light_Ray,Vasy-Wang-21} for functions, \cite{Siamak2016} for one-forms, and \cite{Lauri-Light-21,LOSU-strings} for higher order tensor fields. 
One major distinction is that one can recover spacelike singularities,  probably say something about the lightlike ones, but the timelike ones are hidden. This can be interpreted as ability to see signals moving slower than light (or sound, etc.) but not ones moving faster than it. On the other hand, in some specific situations motivated by physics, signals moving faster than light should not exist (well, ignoring the discussion what a signal is, and ignoring the distinction between phase and group velocities at the moment). This has been used in \cite{Vasy-Wang-21} to show that  if $f$ solves a wave equation with speed one or less, one can recover all singularities, and in fact invert the light ray transform stably on a subclass of functions. The light ray transform on two-tensor fields however remains not well understood, and even its relation to the scattering relation was not clear. Clarifying the latter is one of our goals here. 

A few words about the conventions in this paper. All functions or maps are  smooth. By $\langle \omega,v\rangle$ we denote the action of the covector $\omega$ on the vector $v$, sometimes denoted by $\omega(v)$ in the literature. For a covariant tensor $f$ or order two, $\langle f,v\otimes w\rangle = f_{ij}v^iw^j$ in local coordinates, sometimes denoted as $f(v,w)$ in the literature.  The notation $(u,v)_g$ is reserved for the scalar products of two vectors in the metric $g$. For a vector $v\in T_xM$ with a base point$x$  on the boundary $\partial M$, $v'$ stands for its orthogonal projection to $T_x(\partial M)$. Note that this makes sense as long as $T(\partial M)$ is either time or space like. In section~\ref{sec_stat}, in local coordinates $(t,x)$, we will write $v=(v_t,v_x)$ for a vector $v$, where $v_t$ is the zeroth (time) component of $v$, while $v_x$ is the $n$-vector consisting of the spatial components, not to be confused with partial derivatives.  

\smallskip 
\textbf{Acknowledgments.} 
This work was partly inspired by and started during the author's stay at the RICAM (Linz, Austria) in the fall of 2022. 

\section{The defining function $r$ of $\Sigma$ and its linearization} \label{sec2}
\subsection{The Riemannian case} \label{sec_R}
Let $(M,g)$ be a compact Riemannian manifold of dimension $n$ with boundary. The lens rigidity problem is to recover the metric (and the topology, if unknown) from the lens data 
 $(\mathcal{S},\ell)$ consisting of the images $(y,w)=\mathcal{S}(x,v)$ of all boundary points $x$ and unit incoming directions $v$, where $y$ is the outgoing point (assuming $\gamma_{x,v}$ non-trapping), and $w$ is the outgoing direction of the geodesic $\gamma_{x,v}$ with $\ell(x,v)$ being its length. The map $\mathcal S$ alone is called the scattering relation.\footnote{in author's view, the lens and the scattering data notions should be swapped: lens data should refer to $\mathcal S$ only, with no reference of time of propagation, while (time-dependent) near field scattering data should include the latter. In time-space, time is already included, so calling $\mathcal S$ scattering data is justified.} 
This recovery is expected to be done up to an isometry fixing the boundary $\partial M$ pointwise. If it holds, then $(M,g)$ is called lens rigid. It is a well studied problem, as we pointed out in the introduction. The boundary rigidity problem has the same goal but the data is the geodesic distance $\rho(x,y)$ between boundary points. Under ``simplicity'' assumptions, we have $v=\exp_x^{-1}(y)/  |\exp_x^{-1}(y)|$, which allows us to express $\rho$ through $\ell$ as 
\be{0}
\rho(x,y) = \ell\left(x, \exp_x^{-1}(y)/  |\exp_x^{-1}(y)|\right). 
\ee

A better way to think about the parameterizations of the geodesics leaving and arriving at $\partial M$ is to project $v$ and $w$ on $T_x(\bo)$ and $T_y(\bo)$, respectively; let $v'$ and $w'$, respectively, be those projections. They determine $v$ and $w$ uniquely. Then we can view $\mathcal S$ and $\ell$ as maps from $(x,v')\in B(\bo)$ (the tangent unit ball bundle) to $(y,w')$ which belongs to the same bundle. Only then will  $\mathcal S$ and $\ell$ be invariant under isometries as above. 
 
The so redefined map  $\mathcal S$ is then symplectic when lifted to $\mathcal S^\sharp$ on  $B^*(\bo)$. When $x_0\in\bo$ and $y_0\in\bo$ are not conjugate to each other along some geodesic, $\rho$ is well defined near $(x,y)$ when the distance is the geodesic length restricted to a neighborhood of that geodesic, and 
\be{1}
\mathcal{S}^\sharp \left(x,-\d_x'\rho(x,y)\right) = \left(y,\d_y'\rho(x,y)\right),\quad (x,y)\in \bo\times\bo,
\ee
where $\d'$ stands for the tangential projection of $\d$ onto $T^*(\bo)$. The same formula can be written for $\mathcal{S}$ by replacing $\d$ by $\grad$, which requires knowledge of $g$ on the boundary. 
Formula \r{1} is an observation by Michel \cite{Michel}, see also \cite{S-Serdica}.  Therefore,  $\mathcal S^\sharp$  which maps $\R^{2n-2}$ to itself locally  is actually determined by the derivatives of a (scalar function) mapping $\R^{2n-2}$ to $\R$ locally.   
 
It is straightforward to see that under the non-conjugacy assumption, $\rho$ determines  $(\mathcal S^\sharp,\ell)$  locally, and vice versa. Indeed, knowing $\rho$, we can recover $\mathcal S^\sharp$ by \r{1}, and then $\ell$ by \r{0}. On the other hand, given  $\mathcal S^\sharp$, for $(x,y)\in \partial M\times \partial M$ fixed, we can recover the projection $\xi'$  of the incoming codirection  by \r{1}. Then we recover $\rho(x,y)$ by \r{0} knowing $\ell$. We want to mention that the non-conjugacy assumption makes  $\mathcal S^\sharp$ a ``free'' canonical transformation in the terminology of \cite[chapter~47]{Arnold_Mech}, and then guarantees the existence of a generating function, which happens to be $\rho^2/2$ here. When $(M,g)$ is simple, in fact $\mathcal S^\sharp$ suffices to recover $\rho$. Indeed, by the arguments above, for a fixed $x\in\bo$, we know $\d'_y\rho(x,y)$ for all $y$. We can integrate that along a curve on $\bo$ connecting $x$ and $y$ to recover $\rho(x,y)$.

One of the approaches to boundary/lens rigidity is to linearize near a fixed metric and try to invert stably the resulting linear transform first.  A simple variational argument, see, e.g., \cite{S-Serdica}, shows that the linearization of the boundary distance function leads to the quite nicely looking geodesic X-ray transform of symmetric tensor fields of order two:
\be{2}
\delta \rho(x,y) = X(\delta g) (x,\exp^{-1}_x(y)),
\ee
with $Xf$ defined in \r{3}, 
where $f$ is a two-tensor field, and one often normalizes $v$ to unit vectors. 
We want to invert it stably up a potential field $\d^sv$ with $v=0$ on $\bo$, where $(d^s v)_{ij}= \frac12(v_{i,j}+v_{j, i})$ is the symmetrized differential. Such potential fields linearize the non-uniqueness of the nonlinear problem due to isometries. The geodesic X-ray transform of symmetric tensor fields has been studied extensively, as we pointed out in the Introduction. 
 
As we mentioned in the Introduction, if we linearize  $\mathcal S^\sharp$  instead, we get a not so simple looking formula. We still get a geodesic X-ray transform of symmetric tensor fields but of $\nabla f$, with a weight, plus a zeroth order term, see \cite{SU-MRL, SUV_localrigidity, SUV_anisotropic}.

\subsection{The Lorentzian case}  
Assume that we have a Lorentzian manifold $M$ of dimension $1+n$ now with a metric of signature $(-,+,\dots,+)$. One example is $g = -\d t^2+h_{\alpha\beta}(x)\d x^\alpha \d x^\beta$ with $h$ Riemannian, which leads to essentially the same problem as before, so it is not so interesting but is a good start to understand how the scattering rigidity problem would be formulated in that case. The setup above leads naturally to light rays starting from a cylindrical boundary $\R_t\times N_x$ and ending up there, where $(N,h)$ is compact with boundary, the equivalent of $M$ above. Assuming a general Lorentzian metric and lateral boundary which is timelike (or a spacelike), we get a scattering relation. One of the new features is that light rays do not have natural parameterization: their ``speed squared''  $(\dot\gamma, \dot\gamma)_g$ is zero, and rescaling the parameter along each one still yields a null geodesic. Moreover, given a family of such light rays, we may rescale by a factor changing from geodesic to geodesic, and obtain a different parameterization, no better or worse than the initial one. In any case, fixing some parameterization locally, a linearization of $\mathcal S^\sharp$  near a fixed light ray would produce a transform at least as unpleasant as in the Riemannian case. On the other hand, the much simpler form \r{LRT} of $L$ has already being studied with the anticipation that it must have something to do with the linearization of the Lorentzian scattering rigidity problem. 

The natural questions then are the following. 
Under a non-conjugacy assumption, can we define an equivalent $r(x,y)$ of the boundary distance function so that
\begin{itemize}
\item[(i)] \r{0} and \r{1} hold in some form, 
\item[(ii)] in particular, $r$ and $\mathcal S^\sharp$ determine each other,
\item[(iii)]  {the linearization of $r$ near some $g$ gives us $Lf$ defined in \r{LRT}}.
\end{itemize}

The problem with the attempt to choose $r$ to be a distance function  as a direct analog of $\rho$ is that given two points, they may not be connected by a (unique or not) lightlike geodesic. This is a property we really want in order to get \r{3}, and relate it to $\mathcal{S}$. If they are, then their geodesic distance is zero. The Lorentzian distance, see, e.g., \cite{Alias_2009}, does not seem to give a direct answer, either: it is zero on one side of the light cone (away from the chronological future), and singular at the light cone, where it vanishes. 
 
The  solution we propose is the following. Given a timelike lateral boundary $\bo$, generalizing the cylinder $\R\times \partial N$ above, the set of points connected by a unique lightlike geodesic is a submanifold of $\bo\times\bo$ of codimension one locally under a non-conjugacy assumption. Fix any defining function $r(x,y)$ of it. Then $r$ satisfies (i), (ii), (iii) above in an appropriate sense. Of course, $r$ is defined up to an elliptic factor only. On the other hand, null directions can be parameterized up to a scaling factor, so those two peculiarities correlate well.

\subsection{Main result about the defining function}
Our point of view here  is local, so for this reason, we assume that we work in an ambient  manifold with a complete  Lorentzian metric $g$, and we have two ``small'' either timelike (will be referred to as the (T) case), or spacelike (the (S) case) surfaces $U$ and $V$ corresponding to initial and endpoints, respectively. One can assume one of those cases on one side, and the other one on the other one, as well. 
We fix $x_0\in U$, $y_0\in V$ so that they are connected by a lightlike geodesic $[0,1]\ni t\to \gamma_0(t)$. Assume that 
\be{NC}
\text{$x_0$ and $y_0$ are not conjugate along $\gamma_0$}, \tag{NC}
\ee
and that $\gamma_0$ is transversal to $U$ and $V$ at their only points of intersection, $x$ and $y$. 

In the (T) case, we fix a time orientation on $U$ that we call future pointing, see also \cite{St-Yang-DN}. Assume that $\gamma_0$ is future pointing at $x_0$, and we choose a time orientation on $V$ so that  $\gamma_0$ is future pointing at $y$ as well. We also fix orientation on $U$ and $V$ in classical case, calling the sides containing $\gamma_0$ interior, and the other ones exterior. In the (S) case, the classical orientation relates to time orientation: the interior side of $U$ is the future one; the interior side of $V$ is the past. 
Set $v_{0}:= \dot\gamma(0)$, $w_0:= \dot\gamma(1)$. 
Let $v_0'$, $w_0'$ be their orthogonal projections on $TU$ and $TV$, respectively; see \cite[chapter~2]{ONeill-book}. They must be timelike/spacelike depending on which case we have (T) or (S). 
Let $\mathcal{U}$, $\mathcal V$ be small 
timelike/spacelike conic neighborhoods of $(x_0, v'_0)$ in $TU$, and of $(y_0,w_0;)$ in $TV$, respectively. 
We define the \textit{scattering relations} $\mathcal S$, $\mathcal S^\sharp$ below, see also Figure~\ref{Lorentz_distance_fig1}.

\begin{definition} \label{def1}
The scattering relation $\mathcal{S}: \mathcal{U} \to \mathcal{V}$ is defined by $\mathcal{S}(x,v') = (y,w')$ as follows. Let $v$ be the lightlike vector at $x$ with orthogonal projection $v'$ on $T_xU$, pointing to the interior; then  $y\in V$ is the point where the geodesic $\gamma_{x,v}$ issued from $(x,v)$ meets $V$, and $w'$ is the orthogonal projection on $T_yV$ of its direction there. 

Identifying vectors with covectors by the metric $g$ restricted to $TU$, we define $\mathcal S$ on the cotangent bundle as well, we call it $\mathcal{S}^\sharp$.
\end{definition}

\begin{figure}[h!] 
  \centering
  \includegraphics[scale=1,page=1]{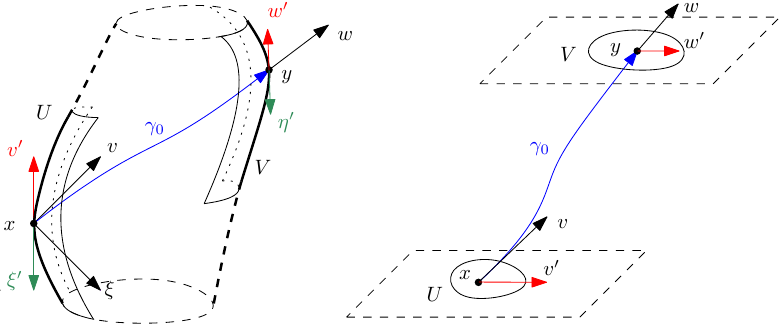}
\caption{\small  The scattering relation $\mathcal{S}(x,v')= (y,w')$ on the tangent bundle for $U$, $V$ either timelike (left) or spacelike (right), and its version $\mathcal{S}^\sharp(x,\xi')= (y,\eta')$ on the cotangent bundle (left only). 
}
\label{Lorentz_distance_fig1}
\end{figure}

 Clearly, $\mathcal S$ and $\mathcal{S}^\sharp$ are positively homogeneous of order one in the fiber variable. We have $\mathcal{S}(x,a(x,v')v') = (y,a(x,v')w') $ for every $a>0$. We may normalize $v'$ in some way to reduce the number of variables. For example, we may require $(v',v')_{g'}= \mp 1$ (recall that $v'$ is spacelike/timelike depending of whether $U$ is timelike/spacelike), where $g'$ is the induced metric. If $x^0$ is a local time variable in the former case, we may require $v^0=1$. More generally, we may 
restrict $v$ to some hypersurface so that each radial ray intersects it transversally. We call each such restriction of $\mathcal S$ a \textit{reduced representation} of $\mathcal{S}$, similarly for  $\mathcal{S}^\sharp$. Each of them is just a representation of $\mathcal S$ or $\mathcal{S}^\sharp$ modulo rescaling by positive factors, depending on $(x,v')$. 
Knowing the reduced version recovers the full one in a trivial way. 

\begin{remark}\label{rem_0}
The non-conjugacy and the transversality assumptions imply that  $\exp_x : \exp_x^{-1}(V)\to V$ is a diffeomorphism for every $x\in U$ as long as $U$ and $V$ are small enough. Assuming that, we can also assume that $\mathcal{U} =\{\exp_x^{-1}(V); \; x\in U\}$; then we have a global diffeomorphism for $(x,v)\in \mathcal{U}$ above. Finally, we can project $v$ to $T_xU$ to get a diffeomorphic map $T_xU\ni v'\to y\in V$ for every $x\in U$. We are going to consider geodesics (lightlike or not) issued from $(x,v)\in \mathcal{V}$ only. 
\end{remark}

\begin{definition}\ 

(a) The set $\Sigma\subset U\times V$ consists of pairs $(x,y)$ so that $x$ and $y$ are connected by a unique lightlike geodesic (locally). 

(b) The smooth function $r: U\times V\to \R$ is called a defining function of $\Sigma$, if (i) $r=0$, $\d r\not=0$ on $\Sigma$, and (ii) $r(x,y)<0$ if and only if the locally unique geodesic connecting $x$ and $y$ is timelike. 
\end{definition}

Condition (ii) is just a sign convention for $r$; negativity means that $y$ is in the chronological future of $x$.

\begin{theorem}\label{thm1}
With the assumptions above, and in particular, with the non-conjugate assumption \r{NC}, we have

(a) $\Sigma$ is a codimension one submanifold of $U\times V$ when $U$ and $V$ are small enough.  

\noindent Let $r:U\times V\to \R$ be any defining function of $\Sigma$. Then 

(b) $\left\{(x,-\d'_x r,y,\d_y'r);\; r(x,y)=0   \right\}$ coincides locally with the graph of some reduced representation of $\mathcal{S}^\sharp$.  

(c) If $g_\tau$ is an one-parameter family of Lorentzian metrics smoothly depending on $\tau$ near $\tau=0$, so that $g_0=g$, and $r_\tau$ are associated defining functions smoothly depending on $\tau$, we have
\be{thm1_1}
\frac{\d}{\d\tau} \Big|_{\tau=0}r_\tau(x,y) = \kappa(x,y) \int_0^1 \left\langle f, \dot \gamma_{[x,y]} (t)\otimes  \dot \gamma_{[x,y]} (t)\right\rangle\,\d t\quad \text{on $\Sigma= \{r(x,y)=0\}$},
\ee
where $f=\d g_\tau/\d \tau|_{\tau=0}$, $[0,1]\ni t\to\gamma_{[x,y]} (t)$ is the locally unique lightlike geodesic in the metric $g$ connecting $x$ and $y$, and $\kappa$ is a smooth non-vanishing function. 
\end{theorem}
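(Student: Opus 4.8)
The plan is to build everything around the energy functional for the geodesic connecting $x$ and $y$, exactly in parallel with the Riemannian variational argument that yields \r{2}, and then use the non-conjugacy hypothesis to convert statements about that functional into statements about the defining function $r$.

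\textbf{Step 1: a canonical defining function via the energy.} First I would fix coordinates so that $\gamma_0$ is the reference lightlike geodesic from $x_0$ to $y_0$, parametrized on $[0,1]$. For $(x,y)$ near $(x_0,y_0)$, the non-conjugacy and transversality assumptions (via Remark~\ref{rem_0} and the inverse function theorem) guarantee a locally unique geodesic $\gamma_{[x,y]}:[0,1]\to M$ with $\gamma_{[x,y]}(0)=x$, $\gamma_{[x,y]}(1)=y$, depending smoothly on $(x,y)$. Define
\be{E-def}
E(x,y) = \int_0^1 \big( \dot\gamma_{[x,y]}(t), \dot\gamma_{[x,y]}(t) \big)_g \, \d t .
\ee
Because $\gamma_{[x,y]}$ is a geodesic, the integrand is constant in $t$, so $E(x,y) = (\dot\gamma_{[x,y]}(0),\dot\gamma_{[x,y]}(0))_g$. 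The geodesic is lightlike precisely when $E(x,y)=0$, timelike when $E(x,y)<0$, spacelike when $E(x,y)>0$; this already matches the sign convention (ii) in the definition of a defining function. The key point to verify is that $\d E\neq 0$ on $\Sigma=\{E=0\}$: this follows from the first variation formula, since varying the endpoint $y$ in a direction $\delta y$ not tangent to $V$ changes $(\dot\gamma(1),\dot\gamma(1))_g$ at first order through the boundary term $2(\dot\gamma_{[x,y]}(1),\delta y)_g$, which is nonzero because $\dot\gamma(1)=w_0$ is lightlike hence nonzero and transversal to $V$. Hence $E$ is itself a defining function of $\Sigma$, proving (a). Any other defining function $r$ satisfies $r = \mu E$ for a smooth nonvanishing $\mu$ near $(x_0,y_0)$, so (b) and (c) for general $r$ follow from the case $r=E$ up to the elliptic factor $\kappa = 1/\mu$ (combined with whatever constant comes out of the computation for $E$).

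\textbf{Step 2: the scattering relation via first variation (part (b)).} With $r=E$, the first variation formula with moving endpoints on $U$ and $V$ gives, on $\Sigma$,
\be{first-var}
\d_x E = -2\, g\big(\dot\gamma_{[x,y]}(0), \cdot\big)\big|_{T_xU}, \qquad \d_y E = 2\, g\big(\dot\gamma_{[x,y]}(1), \cdot\big)\big|_{T_yV},
\ee
i.e. the tangential differentials are (up to the factor $\mp 2$) the metric duals of the tangential projections $v'$ and $w'$ of the geodesic's endpoint velocities. Comparing with Definition~\ref{def1}, the set $\{(x,-\d_x'E, y, \d_y'E):E(x,y)=0\}$ is, after the rescaling by the constant $2$, exactly the graph of $\mathcal S^\sharp$ restricted to the hypersurface cut out by the normalization implicit in using $\gamma_{[x,y]}$ on $[0,1]$ — a reduced representation in the sense defined above. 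Passing to a general defining function $r=\mu E$ multiplies $\d' r$ by $\mu$ on $\Sigma$ plus a term proportional to $E=0$ that drops out, so the graph is rescaled fiberwise by a positive factor, hence still a reduced representation of $\mathcal S^\sharp$; this gives (b). (One should double-check the time-orientation/sign bookkeeping so that ``$-\d_x'r$'' lands on the incoming side and ``$\d_y'r$'' on the outgoing side, matching the future-pointing conventions fixed before Definition~\ref{def1}.)

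\textbf{Step 3: the linearization (part (c)).} For a family $g_\tau$ with $f=\partial_\tau g_\tau|_{\tau=0}$, let $\gamma_\tau$ be the connecting geodesic for $g_\tau$ with the fixed endpoints $x$, $y$, and $E_\tau(x,y)$ the corresponding energy. I would differentiate $E_\tau$ in $\tau$ at $\tau=0$. The variation splits into (i) the explicit dependence of the integrand $(\dot\gamma,\dot\gamma)_{g_\tau}$ on $\tau$ through the metric, contributing $\int_0^1 \langle f, \dot\gamma_{[x,y]}(t)\otimes\dot\gamma_{[x,y]}(t)\rangle\,\d t$, and (ii) the dependence through the change of the curve $\gamma_\tau$. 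For term (ii), because the endpoints are held fixed, the variation field $\partial_\tau\gamma_\tau|_{\tau=0}$ vanishes at $t=0,1$, so the first-variation-of-energy argument (now in $g_0$) shows this contribution is zero: $\gamma_0$ is a $g_0$-geodesic, so the curve-variation term is a pure boundary term that dies. Hence on $\Sigma$,
\be{E-lin}
\frac{\d}{\d\tau}\Big|_{\tau=0} E_\tau(x,y) = \int_0^1 \big\langle f, \dot\gamma_{[x,y]}(t)\otimes\dot\gamma_{[x,y]}(t)\big\rangle\,\d t .
\ee
For a general smooth family $r_\tau = \mu_\tau E_\tau$ of defining functions, $\partial_\tau r_\tau|_{\tau=0} = (\partial_\tau\mu_\tau) E_0 + \mu_0\,\partial_\tau E_\tau|_{\tau=0}$; on $\Sigma$ the first term vanishes since $E_0=0$, leaving $\kappa=\mu_0$ times the integral, which is \r{thm1_1}. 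The smoothness and nonvanishing of $\kappa$ come from those of $\mu_0$, which in turn follow from $\d r_0\neq 0$ and $\d E_0\neq 0$ on $\Sigma$.

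\textbf{Main obstacle.} The routine-looking but genuinely delicate point is the rigorous setup of Step 1 uniformly in $(x,y)$: one must use non-conjugacy to get the smooth family $\gamma_{[x,y]}$ and, crucially, to know that the endpoint map $\exp_{x_0}^{-1}$ is a diffeomorphism so that ``the locally unique geodesic'' is well-defined and smooth — and then to verify $\d E\neq 0$ on $\Sigma$, which is where the transversality of $\gamma_0$ to $U$ and $V$ enters and cannot be dropped. A secondary subtlety is the sign/orientation bookkeeping in \r{first-var} so that (b) genuinely reproduces $\mathcal S^\sharp$ (as in Michel's identity \r{1}) rather than its inverse or a reflection; this requires care with the future-pointing time orientations on $U$ and $V$ and with the fact that $v'$, $w'$ are spacelike or timelike depending on the causal type of the boundary. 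Everything else — the first variation formula, the interchange of $\partial_\tau$ with the $t$-integral, the elliptic-factor reduction from $E$ to a general $r$ — is standard.
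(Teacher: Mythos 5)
Your proposal is correct in substance and follows essentially the same route as the paper: the energy of the locally unique connecting geodesic, parametrized on $[0,1]$, is taken as a canonical defining function; the first variation formula with endpoint variations tangent to $U$ and $V$ identifies $-\d'_x r$ and $\d'_y r$ with (multiples of) the metric duals of $v'$ and $w'$, giving (b); and for (c) the criticality of geodesics for the fixed-endpoint energy kills the curve-variation term, leaving exactly the light ray transform, with the elliptic factor absorbing the freedom $r=\mu E$.

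One point in your Step 1 is not argued correctly as written: to conclude that $E$ is a defining function of $\Sigma\subset U\times V$ (and hence your proof of (a)), you need the differential of $E$ \emph{restricted to} $U\times V$ to be nonzero, but you verify nonvanishing by varying $y$ in a direction \emph{not} tangent to $V$, which only controls an ambient derivative, and you attribute the nonvanishing to transversality of $\dot\gamma(1)$ to $V$. The relevant fact, which is what the paper uses, is that the tangential differential $\d'_yE$ is the $g$-pairing with $\dot\gamma(1)$ restricted to $T_yV$ (your own formula in Step 2, i.e.\ the boundary term in \r{4}), and this is nonzero because the orthogonal projection of the lightlike vector $\dot\gamma(1)$ onto $T_yV$ cannot vanish: $V$ is timelike or spacelike, so its normal is spacelike or timelike, and a lightlike vector cannot be proportional to it. With that substitution your argument closes; note also that the paper proves (a) separately, by observing that $\exp_x$ maps the light cone to a smooth hypersurface (non-conjugacy) meeting $V$ transversally, and obtains $\d'r\neq0$ only as a byproduct, whereas you derive (a) directly from $E$ being a defining function — both are fine once the tangential nonvanishing is justified as above.
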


Before proceeding with the proof, we make a few observations.
\begin{remark}
 The non-uniqueness of the defining function $r$ due to the freedom to multiply by any elliptic factor $\kappa(x,y)$ implies the following. If $r_\kappa=\kappa r$ is another defining function, then in (a), $\mathcal{S}^\sharp(x,\xi') = (y,\eta')$ with $\xi'=-\d_x'r$, $\eta'=\d'_yr$ 
is replaced by $\mathcal{S}^\sharp(x,\kappa \xi') = (y,\kappa \eta')$, which is just another reduced representation of $\mathcal{S}^\sharp$. Each one of them determines $\mathcal{S}^\sharp$ by homogeneity. Replacing $r_\tau$ by $\kappa_\tau r_\tau$ in \r{thm1_1} multiplies the right-hand side by $\kappa_0$. 
\end{remark} 

\begin{remark}
The non-trivial elliptic factor $\kappa$ in \r{thm1_1} is inevitable since the light ray transform on the right is determined up to rescaling of the parameter $t$, and the defining function is determined by such a factor as well, as we already emphasized. 
\end{remark}

\begin{remark}
Accounting for the homogeneity of $\mathcal S^\sharp$ with respect to the fiber variable, Theorem~\ref{thm1}(b) implies that the graph of $\mathcal{S}^\sharp$ (locally) coincides with the twisted conormal  bundle $N^*\Sigma'\setminus 0$. In particular, $\mathcal S^\sharp$ is a symplectic local diffeomorphism. The latter already follows from \cite{St-Yang-DN} since  the canonical relation of the associated DN map related to the wave equation $\Box_gu=0$, at least in the (T) case, is the graph of $\mathcal{S}^\sharp$. Under the non-conjugacy conduction \r{NC}, that FIO has a Schwartz kernel conormal to $\Sigma$, which can be shown using the parametrix. 
\end{remark}

\begin{corollary}\label{cor_2.1}
The scattering relation $\mathcal S^\sharp$ determines $r$ uniquely up to an elliptic factor. On the other hand, each defining function $r$ determines $\mathcal S^\sharp$ uniquely. 
\end{corollary}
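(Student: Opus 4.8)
The plan is to read off both implications directly from Theorem~\ref{thm1}; throughout, as is standard in such problems, I regard the induced metrics on $U$ and $V$ as known, so that $\mathcal S$ and its cotangent version $\mathcal S^\sharp$ are interchangeable data.

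First I would dispose of the easy implication, that a defining function $r$ of $\Sigma$ determines $\mathcal S$. From $r$ one forms the conic set
\[
  \left\{\left(x,\, -\d'_x r(x,y),\, y,\, \d'_y r(x,y)\right);\ r(x,y)=0\right\},
\]
which by Theorem~\ref{thm1}(b) is exactly the graph of some reduced representation of $\mathcal S^\sharp$. Since $\mathcal S^\sharp$ is positively homogeneous of degree one in the fiber variable, the reduced representation recovers it in a trivial way, hence determines $\mathcal S$. Note that this argument does not even see the elliptic-factor ambiguity: replacing $r$ by $\kappa r$ with $\kappa>0$ merely rescales the section by $\kappa$, producing another reduced representation of the \emph{same} $\mathcal S^\sharp$.

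For the converse I would argue in two steps. \emph{Step 1: $\mathcal S$ determines $\Sigma$.} The image of the domain of $\mathcal S$ under $(x,v')\mapsto (x,y)$, where $(y,w')=\mathcal S(x,v')$, is precisely the set of boundary pairs joined by a locally unique lightlike geodesic (here uniqueness is automatic because, by Remark~\ref{rem_0}, $v'\mapsto y$ is a diffeomorphism onto $V$); that is, this image is $\Sigma$, which by Theorem~\ref{thm1}(a) is a codimension-one submanifold of $U\times V$ once $U,V$ are small. \emph{Step 2: $\mathcal S$ determines the orbit of $r$.} A smooth defining function of the hypersurface $\Sigma$ that additionally satisfies the sign convention (ii) is unique up to a smooth \emph{positive} factor: if $r_1=\kappa r_2$ with both satisfying (ii), then $\{r_1<0\}=\{r_2<0\}$ forces $\kappa>0$ on the dense open set $\{r_2\neq0\}$ and hence, by continuity and non-vanishing of $\kappa$, everywhere. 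Finally, to see that $\mathcal S$ pins down this orbit, match the graph of $\mathcal S^\sharp$ against Theorem~\ref{thm1}(b): over each $(x,y)\in\Sigma$ the covectors $-\d'_x r$ and $\d'_y r$ are recovered up to a common positive scalar, so in particular the conormal of $\Sigma$ \emph{with its orientation} — equivalently the timelike side — is recovered. Together with Step~1 this yields $r$ up to a positive elliptic factor.

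The one point that needs a little care is exactly this orientation bookkeeping in Step~2: one must be sure the ambiguity is a genuinely positive factor, so that the reconstruction is again admissible in the sense of (ii), rather than merely a non-vanishing factor. This is secured by the time orientation on $U$ (and the compatible one on $V$) being fixed data, and by Theorem~\ref{thm1}(b) already delivering the conormal covector to $\Sigma$ with a definite sign relative to that orientation. With that settled, the corollary is a straightforward unwinding of Theorem~\ref{thm1}, requiring no new analytic input.
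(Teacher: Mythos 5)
Your proposal is correct and follows essentially the paper's own route: the corollary is meant as a direct unwinding of Theorem~\ref{thm1}(b) together with the remark that multiplying $r$ by an elliptic factor just changes the reduced representation of $\mathcal{S}^\sharp$, which is recovered in full by homogeneity, while conversely $\mathcal S$ determines $\Sigma$ (hence $r$ up to such a factor) from the endpoint pairs of its graph. Your extra care about the sign convention (ii) forcing the factor to be positive is consistent with, and slightly more explicit than, the paper's statement.
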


\begin{example} \label{ex1}
The Riemannian case: Let $g=-\d t^2+h(x,\d x)$ on $M=\R\times N$, where $(N,h)$ is a compact Riemannian manifold with convex boundary. Multiplying $g$ by a positive conformal factor $\lambda(t,x)>0$ leaves $\mathcal{S}^\sharp$ unchanged, see section~\ref{sec_conf}, so this example covers the static case, in fact.

 Take $r(t,x,s,y) = t-s+\rho(x,y)$. Then $r=0$ if and only if $(s,x)$ and $(t,y)$ are connected by the lightlike geodesic $\Gamma:[ s, t]\ni \sigma \mapsto (\sigma,\gamma(\sigma))$, which we will parametrize by a unit length parameter, where $\gamma$ is the unique (locally, under  the non-conjugacy assumption) unit speed geodesic in the $x$-space, connecting $x$ and $y$. We have $\d'_{t,x}r =(1, \d'_{x}\rho)$, $\d'_{s,y}r = (-1, \d'_{y}\rho)$, where $\d'$ stands for the tangential gradient. Converting them to vectors using the associated Lorentzian metric, we get $-\grad'_{t,x}\! r =(1,- \grad'_{x}\! \rho)$, $\grad'_{s,y}\! r = (1, \grad'_{y}\! \rho)$. Those are exactly the fiber components of the Lorentzian so reduced (normalized) scattering relation, see  \r{1}, and also Theorem~\ref{thm_mag} below.  

One can also take $r_1= -(t-s)^2+\rho^2(x,y)$. In a neighborhood of a fixed pair on $\Sigma$, $r_1$ equals $r$ times a non-vanishing factor, and is therefore another defining function. The points of any  such pair $(t,x)$,  $(s,y)$ are connected by a unique (locally) geodesic, not necessarily lightlike. Indeed, one can take $\Gamma$ as above but now $\dot\gamma$ is not unit   anymore; we require $|\dot\gamma|_h= \rho(x, y)/(s-t)$. Then we can take a defining function $r_2$ to be the energy associated with $\Gamma$, which is $r_2=r_1/(s- t)^2$. 
\end{example} 

\begin{proof}[Proof of Theorem~\ref{thm1}]
To prove (a),  notice that for $x\in U$ fixed, $\exp_x(\cdot)$ maps the light cone to a smooth hypersurface in $M$ near $x_0$ by the non-conjugacy condition. By the transversality assumption, the intersection with $V$ is a codimension one submanifold of $V$. We can include $x$ in this argument by adding $n-1$ more dimensions, and complete the proof of (a).

We proceed with the proof of (b). We construct first a defining function using the notion of energy of a curve. 
For a smooth curve $[a,b]\ni t\to c(t)$, one defines the (non positive or negative definite) energy functional
\[
E(c) = \frac12 \int_a^b (\dot c(t), \dot c(t))_g\,\d t.
\]
For a smooth variation $H: [-\eps,\eps]\times [a,b]\to M$ of $c$ (so that $H(\tau,t)|_{\tau=0}=c(t)$), we have the following variational formula
\be{4}
\frac{\d}{\d \tau} \Big|_{\tau=0}E= (W,\dot c)_g\Big|_a^b - \int_a^b (W,D_t \dot c)_g\,\d t,
\ee
where $W=\partial_sH(0,\cdot)$ is the variation field of $H$, and $D_t$ is the $t$-covariant derivative. The proof is the same as in the Riemannian case since it depends on the calculus of the covariant derivatives and it is independent of the signature of the metric. In particular, each geodesic, lightlike or not,  is a critical point of $E$ under proper variations (those fixing the endpoints). 

Let $g_\tau$ be the family of Lorentzian metrics as in the theorem. Let $\gamma_\tau(t)$ be the unique geodesic in the metric $g_\tau$ connecting $x$ and $y$. We parameterize it by $t\in [0,1]$. The associated energy in the metric $g_\tau$ is given by 
\be{5}
E_{g_\tau}(\gamma_\tau) = \frac12 \int_0^1 (\dot \gamma_\tau(t), \dot \gamma_\tau(t))_{g_\tau}\,\d t.
\ee
If we fix $\tau$ in $g_\tau$ on the right (only), the $\tau$-derivative  would vanish since each geodesic with fixed endpoints is a critical point of the energy functional. Thus we get
\be{6}
\frac{\d}{\d \tau} \Big|_{\tau=0}E_{g_\tau}(\gamma_\tau) =  \frac12 \int_0^1 \big\langle f , \dot\gamma_0\otimes\dot\gamma_0  \big\rangle \,\d t,\quad f:=\frac{\d}{\d \tau} \Big|_{\tau=0} g_\tau. 
\ee
With this in mind, we define the following function
\be{7}
r(x,y) = E(\gamma_{[x,y]}),\quad (x,y)\in U\times V,
\ee
where $[0,1]\ni t\to \gamma_{[x,y]}(t)$ is the unique (locally) geodesic connecting $x$ and $y$. We claim that $r$ is a defining function of $\Sigma$. Indeed, the integrand in \r{6} with $\gamma_0 = \gamma_{[x,y]}$ there, is constant along $\gamma_{[x,y]}$; it is zero if and only if $\gamma_{[x,y]}$ is lightlike and then $(x,y)\in \Sigma$ by definition. The integrand is negative, if and only if $\gamma_0$ is timelike, and then $r>0$, as required. We will show that $\d'_{x,y} r\not=0$ on $\Sigma$ as a byproduct of the analysis below.

Let $W$ in \r{4} correspond to variations of $\gamma_{[x,y]}$, see also \r{7}, with $x\in U$ fixed and $y$ varying near $y_0$ in $V$. Then $(\d/\d \tau) r(x,y(\tau)) = (\partial y/\partial\tau,\dot\gamma(1))_g$ at $\tau=0$ by \r{4}. On the other hand, we have $(\d/\d \tau) r(x,y(\tau)) =   \langle\d_y r, \partial y/\partial\tau\rangle $ at $\tau=0$. Converting $ \d_y r$ to a vector by the metric, we see that $\grad_y r(x,y)$ and $\dot\gamma(1)$ have the same projection on $T_yV$. Now, we can fix $y$ and vary $x$ to get a similar conclusion but with a negative sign coming from \r{4}.

Next, we get $ \d_y' r\not=0$, also $\d_x' r\not=0$, because $\dot \gamma'(1)\not=0$ since that projection is either timelike in the (T) case, or spacelike in the (S) case.    This completes the proof of (b). 

Finally, \r{thm1_1} with $\kappa=1/2$ follows from \r{5} when $r=r_\tau$ is as \r{7} for each $\tau$. A different defining function of $\Sigma_\tau$ would be of the form $r_1 =  \kappa_\tau r_\tau$ with some elliptic $\kappa_\tau$, and its $\tau$ derivative at $\tau=0$ just gains the factor $\kappa_0$ (multiplied by $1/2$). This proves (c). 
\end{proof}

\section{Gauge invariance of the scattering relation and of the light ray transform} \label{sec_gauge}
In this section, we collect some known fact about the two maps, see, e.g., \cite{LOSU-strings}. 

There are several obvious groups of transformations which leave the scattering relation invariant, and their linearizations are in the kernel of $L$. 

\subsection{Invariance under diffeomorphisms} \label{sec_conf}
Let $\psi$ be a local diffeomorphism from a neighborhood of $\gamma_0$ to its image in $M$. Assume $\psi|_U=\Id$, $\psi|_V= \Id$ in the setup in section~\ref{sec2}. Then $\mathcal{S}_g = \mathcal{S}_{\psi^*g}$ and  $\mathcal{S}_g^\sharp = \mathcal{S}_{\psi^*g}^\sharp$ in a trivial way. 

The linear counterpart of this is the following. Assume we have a smooth one-parameter family of 
diffeomorphisms $\psi_\tau$ fixing $U$ and $V$ near $\tau=0$ with $\psi_0=\Id$. Then for $g_\tau := \psi_\tau^* g$ we have $(\d/\d t) g_\tau|_{\tau=0} = 2\d^s v$, where $v=(\d/\d t) \psi_\tau|_{\tau=0}$. Note that $v=0$ on $U$ and on $V$. Moreover, all $v$'s like that are possible linearizations of one-parameter groups of diffeomorphisms fixing $U$ and $V$. 
Therefore, we get that $L(\d^s v)=0$ for all such $v$'s. This can be verified independently by applying the Fundamental Theorem of Calculus to the identity
\[
\frac{\d}{\d t} (v , \dot\gamma(t))_g = \langle \d^s v, \dot\gamma\otimes\dot\gamma \rangle,
\]
where on the right, $v$ is identified with its covector version by lowering the indices. This is a well known fact in tensor tomography on Riemannian manifolds, at least. 

\subsection{Invariance under conformal changes} Let $\tilde g = c(x)g$, where $c(x)>0$ is a smooth function. Then lightlike/spacelike/timelike vectors or covectors in the metric $g$ are such in the metric $\tilde g$ as well. Moreover, the lightlike geodesics in the metric $g$ remain lightlike in the metric $\tilde g$ as well as curves but with a changed parameterization preserving the direction (in general, they do not solve the geodesic equation). 

The easiest way to prove this is to pass to the Hamiltonian formalism. An alternative proof is given in  \cite[Lemma~6.1]{LOSU-strings}. 
With $H=\frac12 g^{ij}(x) \xi_i \xi_j$, the Hamiltonian curves  $(x(s),\xi(s))$ at the level $H=0$, transformed into curves on the tangent bundle coincide with the lightlike geodesics. For $\tilde H$, associated with $\tilde g$, we have the system
\[
\dot {\tilde x}^i = c^{-1}g^{ij}\tilde \xi_j, \quad  \dot {\tilde \xi}_i = -\frac12  c^{-1} \partial_{x^i}g^{kl}\tilde \xi_k\tilde \xi_l + \frac12 (\partial_{x^i}c) g^{ij}(x) \tilde\xi_i \tilde\xi_j.
\]
In addition, we have initial conditions $(x(0),\xi(0))=z\in\mathcal U$; we will denote then the solutions by $(x(s,z),\xi(s,z))$. 
On $\tilde H=0$, the last summand vanishes; therefore we are left with 
\be{3.1}
\dot {\tilde x}^i = c^{-1}g^{ij}\tilde \xi_j, \quad  \dot {\tilde \xi}_i = -\frac12  c^{-1} \partial_{x^i}g^{kl}\tilde \xi_k\tilde \xi_l.
\ee
Assume initial conditions at $s=0$. 
Let $\alpha$ solve
\[
\dot\alpha(s,z)= c^{-1}(x(s,z)), \quad \alpha(0,z)=0,
\]
where $z\in \mathcal{U}$ is the initial condition, $x(s,z)$ is the $x$-component of the solution of \r{3.1} with $c=1$ (those quantities have no tildes over them). Then $(\d/\d s)x(\alpha(s,z))= c^{-1}\dot x$, and similarly for $\xi(\alpha(s,z))$, with initial condition $z$ at $s=0$. Comparing this to \r{3.1}, we conclude $(\tilde x, \tilde\xi) = (x,\xi)\circ\alpha(s,\cdot)$. Note that this conclusion presumes the same initial conditions at $s=\tilde s=0$. 

We show next that $\mathcal{S}^\sharp = \tilde{\mathcal{S}}^\sharp $. 
This follows from the fact that for $(x,\xi)$ so that $(x,\xi^\sharp)$ is as in Definition~\ref{def1}, the exit points and codirections related to $g$ and $\hat g$ are independent of the reparameterization but they happens for possibly different values of $s$: $s_0$ and $\tilde s_0$ so that $\alpha(\tilde s_0)=s_0$. 

%
%
 
On the other hand, one can write $\mathcal{S} = g^{-1} \circ \mathcal{S}^\sharp  \circ  g$; therefore, $\tilde{\mathcal{S}} = c^{-1}g^{-1} \circ \tilde{\mathcal{S}}^\sharp  \circ gc = c^{-1}g^{-1} \circ {\mathcal{S}}^\sharp  \circ gc = c^{-1} \circ \tilde{\mathcal{S}} \circ c$, where $g$ denotes the operator $(x,v)\mapsto (x,gv)$, similarly for the other multiplication operators there.  Therefore, $\mathcal{S}$ is \textit{not} invariant under general conformal changes. 

To linearize this, assume $g_\tau = c_\tau g$. Then the linearization of $g_\tau$ at $\tau=0$ is $c(x)g$ with  $c=(\d c/\d \tau) c_\tau|_{\tau=0}$. Thus $L(cg)=0$. This is obvious by itself since the integrand in \r{LRT} vanishes pointwise when $f=cg$. 

\subsection{Lens rigidity and light ray transform injectivity formulations} 
With the above in mind, we can formulate the scattering rigidity problem as follows. Show that $\mathcal{S}^\sharp_{g_1}=\mathcal{S}^\sharp_{g_2}$ implies $g_2= c \psi^*g_1$ with $c>0$, and a diffeomorphism $\psi$ fixing $U$ and $V$. We are vague on purpose here about the assumptions and the region we expect to prove that equality since we have various cases even in the Riemannian case. We want to emphasize that we use $\mathcal{S}^\sharp$ as our data, not $\mathcal{S}$. 

The injectivity of $L$ under the gauge can be formulated like this: show that under some assumptions, $Lf=0$ implies $f=\d^sv+\lambda g$, where $v=0$ on $U$ and on $V$, and $\lambda$ is a scalar function. 

Both problems are open, and in section~\ref{sec_stat}, we will consider the special case of stationary metrics for the scattering rigidity problem.

\section{Rigidity of stationary metrics}\label{sec_stat}
We consider stationary metrics in this section. We refer to \cite{Lauri-Light-21, UhlmannYangZhou} for a justification of the interest in such metrics. 
\subsection{Stationary spacetime geometry} 
\subsubsection{Stationary metrics}
In $\R^{1+n}$, consider metrics of the form
\be{5.1}
g = -\lambda (x)\d t^2+2\tilde \omega_j(x) \d t \,\d x^j 
+\tilde h_{ij}(x)\d x^i \d x^j
\ee
with $\lambda>0$, $\tilde \omega= \tilde \omega_j\d x^j$ an 1-form in $\R^n$, and $\tilde h$ a symmetric tensor on $\R^n$; all time-independent. 
In matrix form,
\[
g= 
  \begin{pmatrix}
    -\lambda & \tilde \omega_1 & \dots & \tilde \omega_n \\
    \tilde \omega_1 & \tilde h_{11} & \dots & \tilde h_{1n}  \\
    \vdots & \vdots & \ddots & \vdots \\
    \tilde \omega_n & \tilde h_{n1}  & \dots & \tilde h_{nn} 
  \end{pmatrix}.
\]
Since we want $g$ to be Lorentzian, it is convenient to complete the square, and after useful rescaling of $\omega$ and $h$ by $\lambda$, write $g$ in the form
\be{5.2}
g= \lambda (x)\left( -\left(\d t+ \omega_j(x)  \d x^j\right)^2 + h_{ij}(x)\d x^i \d x^j\right) 
\ee
with $ h_{ij}=\lambda^{-1}\tilde  h_{ij} + \lambda^{-2}\tilde \omega_i \tilde \omega_j$, assumed positive definitive, and $\omega = -\lambda^{-1}\tilde\omega$. Occasionally, we will use the notation $g_{\lambda,\omega,h}$ for a metric of the kind \r{5.2}. The metric $h$ would be positive definitive,  if  $ \tilde h$  is positive definite as well. 
Given a Lorentzian manifold, one can derive that form of the metric globally as well, where $\omega$ and $\lambda$ are invariantly defined, from abstract assumptions of global hyperbolicity, and the existence of a complete timelike Killing field, see, e.g,  \cite{Lauri-Light-21, StrohmaierZ}. We are not going to go into details of that and just will assume that our Lorentzian manifold is $\R_t \times N_x$, with $(N, h)$ Riemannian, and that the Lorentzian metric there is given by \r{5.2} locally, which is actually a global definition assuming $\omega$ a well-defined one-form on $N$. 

The dot product $(\cdot,\cdot)_g$ can be derived from \r{5.1} by polarization. For $v=(v_t,v_x)$, $w=(w_t,w_x)$, we get
\be{dot}
(v,w)_g = \lambda \big( -(v_t+\langle \omega, v_x\rangle ) (w_t+\langle \omega, w_x\rangle ) +\langle h, v_x\otimes w_x\rangle \big).
\ee

\subsubsection{Invariance of the scattering relation and gauge equivalence}
There are two obvious groups of diffeomorphisms keeping $g$ in the form \r{5.2}, and keeping the scattering relation $\mathcal{S}^\sharp$ the same. First, for any diffeomorphism $\psi:N\to N$ fixing $\partial N$ pointwise,  setting $\Psi:= \Id\otimes\psi$, we have that $\Psi^*g$ is of the same form with $h$, $\omega$ and $\lambda$ replaced by $\psi^* h$, $\psi^*\omega$ and $\psi^* \lambda$. Next, it is easy to see that adding an exact form to $\omega$ provides an isometric metric. Indeed, let $\phi(x)$ be a smooth function. Then with $\tilde \omega:= \omega+ \d\phi$, we have 
\[
\d t+ \omega_j(x)  \d x^j = \d(t+ \phi(x)) +   \tilde  \omega_j(x)\d x^j,
\]
therefore, keeping $x$ the same and doing the change 
\be{t-change}
t'=t+\phi(x)
\ee
yields a metric like \r{5.2} but with $\tilde w$ instead. As we see below, at least locally, only $\d \omega$ matters for the projections of the lightlike geodesics on $M$, consistent with the observation we just made. Therefore, for the diffeomorphism $\Phi(t,x)= (t+\phi(x),x)$, we get $\Phi^*g$ of the same form with $h$ and $\lambda$ the same, and $\omega$ replaced by $\omega+ \d\phi$. Since we want this diffeomorphism to fix $\R\times N$ pointwise, we will require $\phi=0$ on $\partial N$. 
This observation also shows that while there is a well defined time direction $\partial/\partial t$ preserving the geometry, roughly speaking (a Killing vector field), there is no natural time variable $t$, say defined up to translations and time reversal since we  have time shifts depending on $x$ preserving the form of the metric as well. 

Finally, a conformal factor $\mu>0$ keeps the scattering relation $\mathcal{S}^\sharp$  intact, as well. This suggests the following.

\begin{definition}\label{def_gauge}
The metrics $g_{\lambda,\omega,h}$ and $g_{\hat \lambda,\hat\omega,\hat h}$ are called gauge equivalent, if there exists a diffeomorphism $\psi:N\to N$ fixing $\partial N$ pointwise, and a function $\phi$ vanishing on $\partial N$, 
so that
\be{group}
 \hat \omega = \psi^*(\omega +\d\phi), \quad  \hat h = \psi^*h. 
\ee
\end{definition}
The transformations \r{group} determined by $(\psi,\phi)$ form a group with generators the two elementary transformations above. 
Application of $(\psi_1,\phi_1 )$, and then $(\psi_2,\phi_2 )$ is of the same kind with $\psi=\psi_2^*\circ \psi^*_1$, $\phi = \phi_2+\psi_1^*\phi_1$.

Since we are interested in scattering rigidity (related to lightlike geodesics), by the results of section~\ref{sec_gauge}, we can replace $g$ by $\lambda^{-1}g$ having the same scattering relation.
Thus without loss of generality we can assume $\lambda=1$. Then 
\be{5.3x}
 g=- \left(\d t+\omega_j(x)  \d x^j\right)^2 +   h_{ij}(x)\d x^i \d x^j,
\ee
and lightlike vectors $v=(v_t,v_x)$ are characterized by 
\be{5.3xx}
-(v_t+\langle \omega,v_x\rangle)^2 +|v_x|_h^2=0.
\ee
It is convenient to normalize the parameterization along the lightlike rays $(t(\sigma),x(\sigma))$ by requiring $|\dot x|_h=1$. 

\subsubsection{Orthogonal projection on the boundary} \label{sec_proj} 
There is a natural projection $\pi:M = \R\times N \to N$, invariant under changes \r{t-change}, and $N$ can be considered as the manifolds of orbits generated by the Killing vector field $\partial/\partial t$, see, e.g., \cite{StrohmaierZ}.  It generates a projection $\d\pi$ between the tangent bundles. 
It is useful to understand orthogonal projections to $\R\times\partial N$ next. Let $(x,v)\in TN$ be such that $x\in\partial N$, and $v$ is pointing to the exterior of $\R\times N$. Eventually, we will apply this to $(y,w)$ and to $(x,-v)$ in the notation of the scattering relation. Let $\nu_x$ be the exterior unit normal to $\partial N$ in the metric $h$. 
It is straightforward to show, using \r{dot}, that $\nu:= (-\langle \omega, \nu_x\rangle, \nu_x)$ is normal to $\R\times \partial N$, spacelike in particular, exterior, and unit in the sense $(\nu,\nu)_g=1$. 
The projection $v'$ under question is given by $v' =v-( v,\nu)_g\, \nu $ with $\nu$ as above. 
We have $( v,\nu)_g = (v_x,\nu_x)_h$, therefore, in local coordinates,
\be{4.8v}
v' = \left( v_t+ \langle \omega, \nu_x\rangle(v_x,\nu_x)_h  , v_x'\right), 
\ee
where $v_x'$ is just the orthogonal projection of $v_x$ on $T(\partial N)$. 

We will further decompose $v'$ in the following way. We write
\be{5.3xxx}
v'=[v'_t,v_x'],
\ee
where $v_t' = -(v', \partial/\partial t)_{g'}$ is the (scalar) orthogonal projection of $v'$ to $\partial/\partial t$ in the induced metric $g'$, and $v_x'=(\d\pi) v'$. It is easy to see that $v_x'$ is as before, which explains the same notation but the emphasis now is that it has an invariant meaning. For $v_t'$, we get 
\be{5.3y}
v_t'= v_t+ \langle \omega, \nu_x\rangle(v_x,\nu_x)_h + \langle \omega',v'_x\rangle = v_t+\langle \omega,v_x\rangle,
\ee
 where $\omega'$ is $\omega$ restricted to $\bo$. We want to emphasize that \r{4.8v} is a coordinate representation, and while some terms are invariantly defined, $v_t$ and $\langle \omega, \nu_x\rangle$ are not; the latter, for example, is not preserved under coordinate changes \r{t-change}. On the other hand, \r{5.3xxx} has an invariant meaning. 

Finally, if $v$ is lightlike and future pointing, then $v_t'=|v_x|$ by \r{5.3xx}.

It is useful to introduce boundary normal coordinates as in the proposition below. 
\begin{proposition}\label{pr_5.1}
Let $(t_0,x_0)\in \R\times \partial M$. Then there exist local coordinates in $\R\times M$ near $(t_0,x_0)$ in which $g$ in \r{5.3x} takes the form
\[
g = -(\d t+ \omega_\alpha(x) \,\d x^\alpha)^2+ h_{\alpha\beta}(x)\d x^\alpha \d x^\beta + (\d x^n)^2
\]
with some   $ \omega$; and $\R\times \partial N$ is given by $\R\times \{x^n=0\}$. Summation over Greek indices is taken from $1$ to $n-1$. 
\end{proposition}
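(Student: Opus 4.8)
The plan is to construct these boundary normal coordinates in two stages: first arrange that $\{x^n = 0\}$ is the boundary with $g^{nn}$-type normalization coming from the \emph{spatial} Riemannian metric $h$, then check that the cross terms mixing $\d x^n$ with $\d t$ and with $\d x^\alpha$ can be removed while preserving the structural form of the stationary metric \eqref{5.3x}. Concretely, I would start on the base manifold $(N,h)$: near $x_0 \in \partial N$ introduce the usual Riemannian boundary normal (semigeodesic) coordinates $x = (x^1,\dots,x^{n-1},x^n)$, in which $\partial N = \{x^n = 0\}$, the curves $x^\alpha = \text{const}$ are unit-speed $h$-geodesics hitting $\partial N$ orthogonally, and $h = h_{\alpha\beta}(x)\,\d x^\alpha \d x^\beta + (\d x^n)^2$ with no $h_{\alpha n}$ cross terms. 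This is a standard fact about Riemannian metrics and uses only that $\partial N$ is a smooth hypersurface; I may invoke it directly.

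Next I would lift these to $\R \times N$ by keeping $t$ as the first coordinate, so that $\R \times \partial N = \R \times \{x^n = 0\}$ automatically, and substitute into \eqref{5.3x}. The spatial part of $g$ already has the desired shape $h_{\alpha\beta}\,\d x^\alpha \d x^\beta + (\d x^n)^2$, and the term $-(\d t + \omega_j \,\d x^j)^2$ is exactly of the claimed form with $\omega_j$ now expressed in the new spatial coordinates and split as $\omega = \omega_\alpha \,\d x^\alpha + \omega_n \,\d x^n$. The only discrepancy with the asserted normal form is the presence of an $\omega_n \,\d x^n$ piece inside the squared one-form. To kill it, I would use the gauge freedom \eqref{t-change}: choose a function $\phi(x)$ with $\partial_{x^n}\phi = -\omega_n$ (solvable, e.g., by integrating in $x^n$ from the boundary, so that one may also arrange $\phi = 0$ on $\{x^n=0\}$ if desired, though Proposition~\ref{pr_5.1} does not even require that), and set $t' = t + \phi(x)$. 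Then $\d t + \omega_j\,\d x^j = \d t' + (\omega_\alpha - \partial_{x^\alpha}\phi)\,\d x^\alpha$, which eliminates the $\d x^n$ component while leaving $h$ and the overall structure unchanged; relabel $t'$ as $t$ and $\omega_\alpha - \partial_{x^\alpha}\phi$ as $\omega_\alpha$.

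The only point requiring a little care — and the place I would expect the one genuine subtlety — is that $\partial/\partial x^n$ in the new coordinates should be the $g$-orthogonal, $h$-unit normal direction one wants for a boundary-normal system: here one should check, using the dot product formula \eqref{dot} and the computation of $\nu$ around \eqref{4.8v}, that once $\omega_n = 0$ the vector $\partial/\partial x^n$ is indeed $g$-orthogonal to $\R \times \{x^n=0\}$ (and more generally to $\R \times \{x^n = c\}$) and has $(\partial_{x^n}, \partial_{x^n})_g = 1$. This follows immediately from the matrix form of $g$ in the final coordinates, since with $\omega_n = 0$ the last row and column of $g$ are $(0,\dots,0,1)$. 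Nothing else in the construction is delicate: it is a juxtaposition of the standard Riemannian normal-coordinate lemma on the base with the already-established gauge invariance \eqref{t-change}–\eqref{group} of the stationary form of the metric. I would present it in exactly that order, flagging that the coordinate change is the composition of a spatial diffeomorphism fixing nothing in particular (we are working locally, not fixing $\partial N$) with a time shift of the form \eqref{t-change}.
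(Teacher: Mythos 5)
Your proposal is correct and takes essentially the same route as the paper: put $h$ in Riemannian boundary normal coordinates on the base, then shift $t'=t+\phi(x)$ with $\phi$ solving a first-order ODE in $x^n$ (with $\phi=0$ at $x^n=0$) to remove the $\omega_n\,\d x^n$ component, leaving $h$ and the stationary form untouched. (Only a trivial sign check: with $t'=t+\phi$ one has $\d t=\d t'-\d\phi$, so one should solve $\partial_{x^n}\phi=+\omega_n$ rather than $-\omega_n$; this is immaterial, as replacing $\phi$ by $-\phi$ fixes it.)
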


\begin{proof}
We put $h$ in boundary normal coordinates first; the construction is well known. Then we seek $\phi$ so that $(\omega-\d\phi)_n=0$. The latter is equivalent to $\partial_{x^n}\phi=\omega_n$, which we solve with the initial condition $\phi=0$ for $x^n=0$. Then we set $t'= t-\phi(x)$. In the coordinates $(t',x)$, we get the desired form.
\end{proof}

In those coordinates, $\nu=(0,\nu_x)$, $v_t'=v_t+\langle \omega',v'\rangle$.

\subsection{Reduction to a magnetic system} 
We review some results in \cite{Germinario_07}, see also \cite{Flores2002}. 
The geodesics of $g$ as in \r{5.2} there, projected to $N$ under $\pi$, see section~\ref{sec_proj} and Figure~\ref{Lorentz_distance_fig2} below, are characterized as the integral curves of a certain magnetic system. Note that if we include a non-constant conformal factor, as in \r{5.1} or \r{5.2}, and we consider not null geodesics only, then the system has an additional electric potential, see also \cite{Yernat-Hanming-15}. 

For the computations below, recall that if $\alpha$ is an one-form, then $d\alpha$ is a two-form satisfying $\langle \d\alpha, X\otimes Y\rangle = \langle \nabla_X\alpha, Y\rangle - \langle\nabla_Y\alpha, X\rangle $, where $\nabla$ is the covariant differential, and this is true independently of the background metric. 

It follows from \r{4} that a smooth curve connecting two points is geodesic if and only if it is a critical point of the energy functional without being a minimum or a maximum. Let $W = (T,X)$ be a variation of a  geodesic $[0,1]\ni \gamma(\sigma)= (t(\sigma),x(\sigma))$ fixing the endpoints. The energy form takes the form  
\be{5.3a}
E = \int_0^1\left(-\big( (\dot t +\langle\omega,\dot x\rangle\big)^2 +|\dot x|_h^2 \right) \d \sigma.
\ee
Taking a variation of this, we get
\[
0= \int_0^1\left(-2\big( (\dot t + \langle\omega,\dot x\rangle \big) \left( \dot T+ \langle\nabla_X \omega , \dot x  \rangle+\langle\omega,\dot X\rangle \right)  +2(\dot x, \dot X)_h \right) \d \sigma.
\]
We have $\langle\omega,\dot X\rangle = (\d/\d \sigma)\langle\omega,  X\rangle - \langle D_\sigma \omega, X\rangle  $ (where $D_\sigma=\nabla_{\dot x}$), thus
\[
\begin{split}
\langle\nabla_X \omega , \dot x  \rangle+\langle\omega,\dot X\rangle &= \langle\nabla_X \omega , \dot x  \rangle+ \frac{\d}{\d \sigma}\langle\omega,  X\rangle - \langle D_s \omega,  X\rangle \\
&= \frac{\d}{\d \sigma}\langle\omega,  X\rangle + \langle -\d\omega, \dot x\otimes X  \rangle.
\end{split}
\]
After some integration by parts we obtain
\[
0= \int_0^1\left(\frac{\d}{\d \sigma} (\dot t +\langle\omega ,\dot x\rangle \big)  ( T+ \langle \omega,X\rangle)  
+   \left(\dot t +\langle\omega ,\dot x\rangle \right)  \langle \d\omega, \dot x\otimes X  \rangle  -(D_\sigma\dot x,  X)_h  \right)  \d \sigma.
\]
Since this is true for every perturbation $(T,X)$, we get
\be{5.4}
\begin{split}
\frac{\d}{\d \sigma} (\dot t + \langle\omega ,\dot x\rangle  \big) &=0,\\
D_\sigma\dot x^j  &= \left(\dot t + \langle\omega ,\dot x\rangle \right)  {{(\d\omega)}_i}^{j} \dot x^i .
\end{split}
\ee
Therefore, $\dot t + \langle\omega ,\dot x\rangle = k=\text{const}$. This can also be interpreted as $(\partial/\partial t, \dot\gamma)_g=\text{const.}$ for every  geodesic, i.e., the energy of a particle is constant for all stationary observers. We get
\begin{align}  \label{5.5a}
\dot t +\langle\omega ,\dot x\rangle   &=k,\\
D_\sigma\dot x  &= k Y\dot x, \label{5.5b}
\end{align}
where $Y:TN\to TN$ is given by  $(Yu)^i = {{(\d\omega)}_i}^{j}u^i$, in other words, $\langle Yu,v\rangle_h = \langle \d\omega, u\otimes v\rangle$ for every two vector fields $u$ and $v$. 
\begin{figure}[h!] 
  \centering
  \includegraphics[scale=1,page=2]{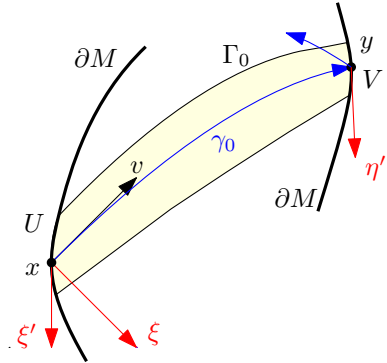}
\caption{\small  A lightlike geodesic and its projection $x(\sigma)$ on $N$.
}
\label{Lorentz_distance_fig2}
\end{figure}
Now we are in the framework of \cite{St-magnetic}, see also Appendix~\ref{app}. Equation \r{5.5b} is  Newton's law of magnetic geodesics with Lorentzian force $kY$. Since the operator $Y$ is anti-symmetric, we have $|\dot x|_h=m=\text{const.}$, i.e., $\sigma$ is proportional to the arc-length parameter on the base $N$, which in general is not (proportional to) the time $t$. The constants $k$ and $m$ are $\sigma$-independent but they may change from geodesic to geodesic, in principle. For $\gamma = (t(\sigma),x(\sigma))$, we have $(\dot\gamma, \dot\gamma)_g = -k^2+m^2$. If $\gamma$ is lightlike and future pointing, we can take $k=m=1$ after replacing $\sigma$ by $k\sigma$.  When $\gamma$  is not lightlike, we can have $k=0$. Then $x(\sigma)$ is a geodesic in $(N,h)$.

Next, integrating \r{5.5a}, we get
\be{5.5c}
t(\sigma)+\int_{\sigma_0}^\sigma \langle\omega,\dot x\rangle-k\sigma=\text{const.}
\ee
In the case $k=1$ which we really need below, this shows that $t$ is actually an action variable, see \r{A1}.

We have thus proved part (a) of Theorem~\ref{thm_mag} below. 
In preparation to formulate part (b), (c), denote by $\mathcal{S}_\text{mag}: BN\to BN$ the scattering relation related to the magnetic system \r{5.5b} with $k=1$ there, which fixes unit speed along the magnetic geodesics. Let $\ell_\text{mag} :BN\to\R_+$ be the travel time, which is also the length of the magnetic geodesic inside $N$ in the metric $h$. The boundary action function $\mathbb A$ is defined in Appendix~\ref{app}. 

Recall that we assumed $\lambda=1$ after Definition~\ref{def_gauge}. Since we will use the theorem below for $\mathcal{S}^\sharp$, which is independent of $\lambda$, this is enough for our purposes. The theorem is easier to formulate for $\mathcal{S}$ however. 

\begin{theorem}\label{thm_mag}
Let $(N,h)$ be a compact Riemannian manifold with a strictly convex boundary, let $\omega$ be an one-form on $M$, and let  $M=\R\times N$ be equipped with the stationary Lorentzian metric
\be{thm_mag1}
g = -   (\d t+\langle w,\d x\rangle)^2 + h(x) .
\ee
Then we have the following.

(a) The lightlike geodesics $(t(\sigma),x(\sigma))$ solve \r{5.5a}, \r{5.5b} with $k$ independent of $\sigma$ reflecting the freedom of affine changes of $\sigma$. Future propagation corresponds to $k>0$.  For the magnetic geodesics $x(\sigma)$ we have $|\dot x|_h=|k|$.

(b) With the notation in \r{5.3xxx}, if 
\be{5.6}
\mathcal{S} \left(t,x,[v_t', v_x']  \right) = \left(s,y,[w_t', w_x']\right),
\ee
with $v'$  
normalized so that $v_t'=1$, then $w_t'=1$ as well, and 
\be{5.7}
\mathcal{S}_\text{\rm mag}(x,v'_x) = (y,w_x'), \quad \ell_\text{\rm mag}(x,v_x') = s-t+ \int_{\gamma_{x,v'}}\omega,
\ee
and in particular,
\be{5.8}
\mathbb{A}(x,y) =s-t \quad \text{with $ y= \exp_x^\text{\rm mag} v$},
\ee
where $v\in T_xM$ is the incoming lightlike vector with projection $v'$, and $\exp_x^\text{\rm mag}$ is the magnetic exponential map.

(c) Each one of the three quantities determines the other two: $\mathcal S$, $\mathcal{S}_\text{\rm mag}$, and $\mathbb A$. 
\end{theorem}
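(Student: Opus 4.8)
The plan is to prove Theorem~\ref{thm_mag} by first establishing part (a) --- which the preceding computation has essentially done --- and then carefully tracking the boundary data through the projection $M\to N$ and the time-shift gauge freedom. Part (a) is already contained in the derivation of \r{5.5a}, \r{5.5b}: setting $\lambda=1$ by the conformal invariance of section~\ref{sec_gauge}, the Euler--Lagrange equations for the energy functional \r{5.3a} are exactly \r{5.5a}, \r{5.5b}, the constant $k=\dot t+\langle\omega,\dot x\rangle = -(\partial/\partial t,\dot\gamma)_g$ is conserved, $|\dot x|_h$ is conserved because $Y$ is $h$-antisymmetric, and $(\dot\gamma,\dot\gamma)_g=-k^2+|\dot x|_h^2$ shows lightlike $\Leftrightarrow$ $|\dot x|_h=|k|$; future-pointing forces $k>0$ by the sign convention on the time orientation. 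After rescaling $\sigma\mapsto k\sigma$ we may take $k=m=1$, and this is precisely the magnetic system \r{5.5b} with unit speed, i.e.\ the system governing $\mathcal{S}_\text{mag}$ and $\ell_\text{mag}$.

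For part (b), I would fix an incoming lightlike vector $v=(v_t,v_x)$ at $x\in\partial N$, normalize along the ray so $|\dot x|_h=1$ (hence $k=1$ since $\gamma$ is future-pointing lightlike), and follow the geodesic $\gamma=(t(\sigma),x(\sigma))$ to its exit point. The spatial projection $x(\sigma)$ is, by part (a), exactly the unit-speed magnetic geodesic through $(x,v_x')$, where $v_x'$ is the $h$-orthogonal projection of $v_x$ to $T(\partial N)$ --- this uses the identity \r{4.8v}--\r{5.3xxx} identifying the spatial part of the Lorentzian orthogonal projection $v'$ with the Riemannian orthogonal projection of $v_x$. So $\mathcal{S}_\text{mag}(x,v_x')=(y,w_x')$ and $\ell_\text{mag}(x,v_x')$ is the $h$-length $s_{\text{exit}}$ of the magnetic geodesic, which equals the $\sigma$-parameter length since $|\dot x|_h=1$. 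To get the time component, integrate \r{5.5a} from $x$ to $y$: $t(\sigma_{\text{exit}})-t(0)=\int_0^{\sigma_{\text{exit}}}(k-\langle\omega,\dot x\rangle)\,\d\sigma = \ell_\text{mag}-\int_{\gamma_{x,v'}}\omega$, which rearranges to the second equation in \r{5.7}. The claim $w_t'=1$ when $v_t'=1$ follows from the last sentence before Proposition~\ref{pr_5.1}: for a future-pointing lightlike vector $v_t'=|v_x|_h$, and since $|\dot x|_h$ is conserved and we normalized it to $1$ at the entry point, it is $1$ at the exit point too; note $v_t'=v_t+\langle\omega,v_x\rangle=k$ by \r{5.3y}, consistent with $k=1$. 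Finally \r{5.8} follows from \r{5.7} together with the definition of the boundary action function $\mathbb{A}$ in Appendix~\ref{app}: by \r{5.5c} (with $k=1$), $t$ is an action variable, so $\mathbb{A}(x,y)=s-t$; alternatively $\mathbb{A}=\ell_\text{mag}+\int\omega$ by its Appendix definition, which equals $s-t$ by the second identity in \r{5.7}.

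For part (c), I would argue by the mutual-determination chain. Given $\mathcal{S}$, restrict to the normalization $v_t'=1$ to read off $v_x'$ (the spatial part of $v'$) and $w_x'$, hence $\mathcal{S}_\text{mag}$; reading off $s-t$ gives $\mathbb{A}$ by \r{5.8}, and then $\ell_\text{mag}$ by \r{5.7} since $\int_{\gamma_{x,v'}}\omega$ is computable once the magnetic geodesic is known --- actually $\ell_\text{mag}$ and $\mathbb{A}$ already determine each other given $\omega$, but the cleaner statement is that $\mathcal{S}$ determines the pair $(\mathcal{S}_\text{mag},\ell_\text{mag})$, which by strict convexity determines $\mathbb{A}$ and conversely (this equivalence for magnetic systems is in \cite{St-magnetic} and Appendix~\ref{app}). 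Conversely, from $\mathcal{S}_\text{mag}$ together with either $\ell_\text{mag}$ or $\mathbb{A}$ one reconstructs the exit point $y$, the exit direction $w_x'$, the exit time $s$ (via $s-t=\mathbb{A}(x,y)$), and hence the full boundary vectors $[v_t',v_x']\mapsto[w_t',w_x']$ with $v_t'=w_t'=1$; by homogeneity of $\mathcal{S}$ in the fiber (established before Proposition~\ref{pr_5.1}) this fixes all of $\mathcal{S}$.

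I expect the main obstacle to be bookkeeping rather than conceptual: keeping straight the three layers of projection (Lorentzian $v\mapsto v'$ onto $T(\R\times\partial N)$, then the splitting $v'=[v_t',v_x']$, then $\d\pi$ to $TN$) together with the gauge freedom $t\mapsto t+\phi(x)$, and verifying that the quantity $\int_{\gamma_{x,v'}}\omega$ appearing in \r{5.7} is the right gauge-covariant object matching the Appendix definition of $\mathbb{A}$ --- in particular checking that the $\phi=0$-on-$\partial N$ condition makes $s-t$ genuinely well defined. Using the boundary normal coordinates of Proposition~\ref{pr_5.1}, in which $\nu=(0,\nu_x)$ and $v_t'=v_t+\langle\omega',v'\rangle$, should make all of these identifications transparent and is where I would do the actual computation.
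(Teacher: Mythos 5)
Your proposal is correct and takes essentially the same route as the paper: part (a) from the Euler--Lagrange derivation of \r{5.5a}--\r{5.5b}, part (b) by normalizing $k=v_t'=1=|\dot x|_h$, projecting to the magnetic system and integrating \r{5.5a} to get \r{5.7}--\r{5.8}, and part (c) via the cited equivalence of $\mathcal{S}_\text{mag}$ and $\mathbb{A}$ from \cite{St-magnetic} plus homogeneity. One small slip in your ``alternative'' check: the Appendix definition is $\mathbb{A}(x,y)=\ell_{x,y}-\int_{\gamma_{[x,y]}}\omega$ (minus sign), not $\ell_\text{mag}+\int\omega$; with the correct sign it indeed gives $\mathbb{A}=s-t$, consistent with your main derivation via the action variable.
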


\subsection{Reduction of the scattering relation to a magnetic one}

\begin{proof}[End of the proof of Theorem~\ref{thm_mag}]
 
 By \r{5.5a}, \r{5.5b}, for the geodesic $\gamma_{x,v'}= (x(\sigma), t(\sigma))$ issued from $(x,v)$ with projection $v'$, normalized as in the theorem, we have $k=1= |\dot x|_h$ in \r{5.5a}, \r{5.5b}.  Projecting on $N$, we get the first identity in \r{5.7}. For the second one, we refer to \r{5.5c}: with $k=1$ there and an initial condition $t(\sigma)=t$ (the $t$ on the right is the initial moment in \r{5.6} and the one on the left is the zeroth component of $\gamma_{x,v'}$), we get that $s-t$ is equal to the action along the ray, which proves the rest of the (b) statement. 

The proof of (c) is similar to that in the Riemannian case. That $\mathcal{S}_\text{mag}$ and $\mathbb A$ determine each other is proved in \cite{St-magnetic}, and the proof is similar to the arguments in section~\ref{sec_R}. 
Knowing $\mathcal S$, we recover $\mathcal{S}_\text{mag}$ by part (b). Finally, knowing one of $\mathcal{S}_\text{mag}$ and $\mathbb A$, we recover the other as well, and then we recover $\mathcal S$ by \r{5.7} and \r{5.8}. 
\end{proof}

\subsection{Rigidity results} 
We formulate some rigidity results as a consequence of the equivalence between the rigidity problems for stationary Lorentzian metrics and magnetic systems established in Theorem~\ref{thm_mag}; and from the magnetic rigidity results in \cite{St-magnetic}, see the appendix. We call $(M,g)$ simple, if the projected magnetic system is simple, see the appendix. In the next statements, $g$ is of the form \r{5.2}, i.e., $g=g_{\lambda, \omega,h}$. Given another such metric $\hat g$, all associated quantities are decorated with a hat over them.

\begin{theorem}
Let $(M,g)$ and $( M, \hat g)$ be simple and stationary. Then $\mathcal S^\sharp =\hat{\mathcal{S}}^\sharp$ implies that 
$g$ and $\hat g$  are gauge equivalent if and only if 
$\mathbb{A}= \hat{\mathbb{A}}$
implies that $(h,\omega )$  and $(\hat h,\hat \omega )$  are magnetically gauge equivalent.  
In particular,  the simple magnetic system $(N,h,\omega)$ is boundary rigid if and only if the stationary Lorentzian manifold $(M,g_{\lambda,\omega,h})$ is boundary rigid. 
\end{theorem}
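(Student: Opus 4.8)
The plan is to prove this by \emph{translation}: after a harmless conformal normalization, the pair $(\mathcal S, \text{gauge equivalence})$ on the Lorentzian side and the pair $(\mathbb A, \text{magnetic gauge equivalence})$ on the magnetic side are in exact correspondence, so the two implications in the statement are literally the same implication, and the ``in particular'' clause is obtained by quantifying over all comparison objects.

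First I would reduce to $\lambda=\hat\lambda\equiv 1$. By the conformal invariance of the scattering relation (section~\ref{sec_conf}), replacing $g=g_{\lambda,\omega,h}$ by $\lambda^{-1}g=g_{1,\omega,h}$, and likewise for $\hat g$, changes neither $\mathcal S$ nor the gauge class of Definition~\ref{def_gauge}, which places no constraint on $\lambda$ whatsoever. Simplicity is preserved too: by Theorem~\ref{thm_mag}(a) the lightlike geodesics of $g_{1,\omega,h}$ project to the unit-speed magnetic geodesics of $(N,h,\omega)$, and strict convexity of $\partial N$ in $h$ together with absence of conjugate points along magnetic geodesics are precisely the simplicity hypotheses for the magnetic system in \cite{St-magnetic}; I would check these are equivalent to the corresponding conditions for the lightlike geodesics of $g$. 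Thus ``$(M,g)$ simple stationary'' and ``$(N,h,\omega)$ simple magnetic'' name the same hypothesis, and the assignment $g_{\lambda,\omega,h}\mapsto(N,h,\omega)$ (reading off $h,\omega$ from the $\lambda=1$ form) is a bijection between the two classes of comparison objects.

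Next I would match data and conclusions. Since $\mathcal S$ and $\hat{\mathcal S}$ are defined on the same bundle over $\R\times\partial N$ under the usual identification of boundaries, Theorem~\ref{thm_mag}(c) applies to $g$ and to $\hat g$ separately, so $\mathcal S$ and $\mathbb A$ determine each other; hence $\mathcal S=\hat{\mathcal S}$ if and only if $\mathbb A=\hat{\mathbb A}$, i.e.\ the two hypotheses coincide. For the conclusions, comparing Definition~\ref{def_gauge} with the definition of magnetic gauge equivalence recalled in Appendix~\ref{app} and in \cite{St-magnetic}, both are given by a pair $(\psi,\phi)$ with $\psi:N\to N$ a diffeomorphism fixing $\partial N$ pointwise and $\phi$ vanishing on $\partial N$, acting by $(h,\omega)\mapsto(\psi^*h,\psi^*(\omega+\d\phi))$ (the $\lambda$-slot is inert on both sides, a conformal factor being pure gauge). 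So the two notions are identical, and ``$\mathcal S=\hat{\mathcal S}\Rightarrow g\sim\hat g$'' and ``$\mathbb A=\hat{\mathbb A}\Rightarrow(h,\omega)\sim(\hat h,\hat\omega)$'' are the same statement, giving the biconditional. The ``in particular'' clause then follows: boundary rigidity of $(N,h,\omega)$ asserts the second implication for every comparison simple magnetic system, boundary rigidity of $(M,g_{\lambda,\omega,h})$ asserts the first for every comparison simple stationary metric, and by the bijection of comparison classes the two hold or fail together.

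The main obstacle I anticipate is purely bookkeeping rather than analytic: verifying that the simplicity package (convexity of the boundary, non-conjugacy) transfers cleanly between the Lorentzian lightlike flow and the projected magnetic flow in both directions, and that the boundary identification implicitly needed to even write $\mathcal S=\hat{\mathcal S}$ is the same one used on the magnetic side. Once these compatibilities are pinned down, there is no content beyond Theorem~\ref{thm_mag} and the bare comparison of the two gauge definitions.
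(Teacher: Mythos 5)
Your proposal is correct and follows essentially the same route as the paper, which states this theorem without a separate proof as an immediate consequence of Theorem~\ref{thm_mag} (conformal reduction to $\lambda=1$, equivalence of $\mathcal S$ and $\mathbb A$ via part (c)) together with the observation that Definition~\ref{def_gauge} coincides with magnetic gauge equivalence since $\psi^*(\omega+\d\phi)=\psi^*\omega+\d(\phi\circ\psi)$. Your flagged bookkeeping point (matching the simplicity notions under the projection to the magnetic system) is likewise left implicit in the paper.
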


\begin{corollary}[Rigidity in $1+2$ dimensions]
Simple stationary manifolds $(M,g)$ of dimension $\dim M=1+2$ are lens rigid. 
\end{corollary}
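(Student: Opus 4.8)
The plan is to deduce the corollary directly from the theorem immediately preceding it together with the known magnetic rigidity results of \cite{St-magnetic}. The logical chain is: (i) simple stationary $(M,g)$ in $\dim M = 1+2$ corresponds, via Theorem~\ref{thm_mag}, to a simple magnetic system $(N,h,\omega)$ on a two-dimensional base $N$ (since $\dim M = 1+n$ with $n = 2$); (ii) simple magnetic systems on surfaces are boundary rigid by the results collected in the Appendix from \cite{St-magnetic}; (iii) therefore, by the ``in particular'' clause of the preceding theorem, $(M,g)$ is boundary rigid; and (iv) finally, since the setup is simple, boundary rigidity upgrades to lens rigidity because $\mathcal S$ and the boundary action function $\mathbb A$ determine each other on simple manifolds, as established in Theorem~\ref{thm_mag}(c) and discussed in section~\ref{sec_R} (on a simple manifold one recovers the scalar generating function from $\mathcal S$ by integrating $\grad'_y$ along a boundary curve, and conversely).

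First I would make explicit that the hypotheses of the preceding theorem are met: $(M,g)$ simple and stationary means we may take coordinates so that $g = g_{\lambda,\omega,h}$ as in \r{thm_mag1} with $(N,h)$ a compact simple surface with strictly convex boundary, and (after replacing $g$ by $\lambda^{-1}g$, which leaves $\mathcal S$ unchanged by section~\ref{sec_conf}) we may assume $\lambda = 1$. Simplicity of $(M,g)$ should be unwound into simplicity of the associated magnetic system $(N,h,\omega)$ — i.e.\ $\partial N$ strictly convex with respect to the magnetic flow, no conjugate points along magnetic geodesics, and the magnetic exponential map a diffeomorphism — this is the content of the reduction in section~\ref{sec_stat} via \r{5.5a}--\r{5.5b}, and I would cite \cite{Germinario_07} for the correspondence of the flows.

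Next I would invoke the two-dimensional magnetic boundary rigidity theorem from \cite{St-magnetic} (stated in Appendix~\ref{app}): every simple magnetic system on a compact surface is boundary rigid, meaning $\mathbb A$ determines $(h,\omega)$ up to magnetic gauge equivalence. Feeding this into the ``if and only if'' of the preceding theorem gives that the simple stationary $(M,g)$ with $\dim M = 1+2$ is boundary rigid, i.e.\ $\mathcal S = \hat{\mathcal S}$ forces $g$ and $\hat g$ gauge equivalent in the sense of Definition~\ref{def_gauge}. To conclude lens rigidity I would note that by Theorem~\ref{thm_mag}(c), on a simple manifold $\mathcal S$, $\mathcal S_{\text{mag}}$ and $\mathbb A$ are mutually determined, so boundary rigidity and lens rigidity coincide here; alternatively, $\mathbb A(x,y) = s - t$ is recovered from $\mathcal S$ by integrating the tangential gradients \r{1} along $\partial N$ exactly as in section~\ref{sec_R}. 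This gives the stated lens rigidity.

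The main obstacle — really the only nontrivial point — is verifying that the simplicity assumption on the Lorentzian manifold $(M,g)$ translates cleanly into simplicity of the magnetic system on the base, and in particular that the strict convexity of $\R \times \partial N$ as a timelike hypersurface corresponds to magnetic strict convexity of $\partial N$; the non-conjugacy transfers through the correspondence of the flows in \r{5.5b}, but one must check the convexity translation and that the dimension count $n = 2$ is exactly what the surface case of \cite{St-magnetic} requires. Everything else is a formal consequence of Theorem~\ref{thm_mag} and the cited magnetic rigidity result, so the proof is short.
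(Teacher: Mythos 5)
Your proposal is correct and follows essentially the same route the paper intends: reduce the simple stationary metric via Theorem~\ref{thm_mag} (after normalizing $\lambda=1$ by conformal invariance) to a simple magnetic system on the two-dimensional base $N$, invoke the two-dimensional magnetic boundary rigidity of \cite{St-magnetic}, and use the equivalence of $\mathcal S$, $\mathcal S_{\text{mag}}$ and $\mathbb A$ from Theorem~\ref{thm_mag}(c) to pass between scattering and boundary-action data. The translation of simplicity that you flag as the main obstacle is not really an issue, since simplicity of the stationary manifold is understood here precisely as simplicity of the associated magnetic system $(N,h,\omega)$.
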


\begin{corollary}[Rigidity in a given conformal class]
Let 
$(M,g)$ and $(M,\hat g)$ 
be two simple stationary Lorentzian systems so that $\hat h = \mu(x) h$ with some $\mu>0$. If $\mathcal{S}^\sharp= \hat{\mathcal{S}}^\sharp$, then $\mu=1$ and $\hat \omega = \omega+\d\phi(x)$ with some $\phi$ vanishing on $\partial N$.
\end{corollary}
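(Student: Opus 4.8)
The proof strategy is essentially to transport the rigidity equivalence through the dictionary established in Theorem~\ref{thm_mag}, so that rigidity of the stationary Lorentzian system and boundary rigidity of the associated magnetic system are literally the same statement. The plan is as follows. First I would fix simple stationary manifolds $(M,g)=(\R\times N, g_{\lambda,\omega,h})$ and $(\hat M,\hat g)=(\R\times \hat N, g_{\hat\lambda,\hat\omega,\hat h})$. By the conformal invariance established in section~\ref{sec_gauge}, replacing $g$ by $\lambda^{-1}g$ and $\hat g$ by $\hat\lambda^{-1}\hat g$ does not change the scattering relations, so without loss of generality $\lambda=\hat\lambda=1$; note that this reduction is already built into the notion of gauge equivalence in Definition~\ref{def_gauge}, since $\lambda$ does not appear in \r{group}. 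Then the projection to the base, carried out in the proof of Theorem~\ref{thm_mag}, associates to $g$ the magnetic system $(N,h,\omega)$ with Lorentz operator $Y$ as in \r{5.5b}, and likewise $(\hat N,\hat h,\hat\omega)$ to $\hat g$.

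Next I would establish the two directions of the biconditional separately, but both rely on Theorem~\ref{thm_mag}(c): $\mathcal S$, $\mathcal S_{\text{mag}}$, and $\mathbb A$ determine each other, and similarly for the hatted objects. For the forward direction, suppose stationary Lorentzian rigidity holds, i.e.\ $\mathcal S=\hat{\mathcal S}$ forces $g$ and $\hat g$ to be gauge equivalent in the sense of \r{group}. Given $\mathbb A=\hat{\mathbb A}$, part (c) of Theorem~\ref{thm_mag} upgrades this to $\mathcal S=\hat{\mathcal S}$ (via \r{5.8} and \r{5.7}), hence $(\psi,\phi)$ with $\hat\omega=\psi^*(\omega+\d\phi)$ and $\hat h=\psi^*h$; but this is exactly the definition of magnetic gauge equivalence of $(h,\omega)$ and $(\hat h,\hat\omega)$ used in \cite{St-magnetic} (see Appendix~\ref{app}). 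Conversely, if magnetic boundary rigidity holds on the base, then $\mathcal S=\hat{\mathcal S}$ gives $\mathbb A=\hat{\mathbb A}$ by \r{5.8}, which forces magnetic gauge equivalence $\hat\omega=\psi^*(\omega+\d\phi)$, $\hat h=\psi^*h$, and this is precisely \r{group}, so $g$ and $\hat g$ are gauge equivalent. The last sentence of the theorem is then the special case where one does not quotient by anything: "boundary rigid" for the magnetic system means $\mathbb A=\hat{\mathbb A}\Rightarrow$ magnetic gauge equivalence, which by the same translation is "boundary rigid" for $(M,g_{\lambda,\omega,h})$.

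The one place requiring genuine care — and which I expect to be the main obstacle — is verifying that the two gauge groups correspond \emph{exactly}, not merely up to something. Concretely: (i) one must check that the diffeomorphism $\psi$ and the function $\phi$ produced by magnetic rigidity on $N$ lift to an admissible Lorentzian gauge transformation $(\Psi,\Phi)$ fixing $\R\times\partial N$ pointwise, which is the content of the group law stated after Definition~\ref{def_gauge} and the time-shift change \r{t-change}; and (ii) one must confirm that "simple" for $(M,g)$ translates to "simple magnetic system" for $(N,h,\omega)$ in the sense required by \cite{St-magnetic} — i.e.\ strict convexity of $\partial N$ in $h$, no conjugate points along magnetic geodesics, and the relevant non-trapping condition — using the non-conjugacy and transversality hypotheses of Theorem~\ref{thm1} together with part (a) of Theorem~\ref{thm_mag}. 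I would also note that the conformal factors $\lambda,\hat\lambda$ genuinely drop out: they affect neither the magnetic data $(h,\omega)$, nor $\mathbb A$, nor the lightlike scattering relation, so the biconditional is insensitive to them, which is why the statement is phrased in terms of $(h,\omega)$ alone.

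Finally, for the two corollaries: the $1+2$ case follows since the base $(N,h)$ then has $\dim N=2$, where simple magnetic systems are known to be boundary rigid by \cite{St-magnetic}; and the conformal-class corollary follows by feeding the hypothesis $\hat h=\mu h$ into the magnetic rigidity-in-a-conformal-class result of \cite{St-magnetic}, which forces $\mu=1$ and $\hat\omega=\omega+\d\phi$ with $\phi|_{\partial N}=0$, after translating $\mathcal S=\hat{\mathcal S}$ to $\mathbb A=\hat{\mathbb A}$ as above.
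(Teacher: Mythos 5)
Your proposal is correct and follows essentially the paper's route: the corollary is stated there without separate proof, being an immediate consequence of Theorem~\ref{thm_mag}(b),(c) (which converts $\mathcal S=\hat{\mathcal S}$ into $\mathbb A=\hat{\mathbb A}$ for the projected magnetic systems, using \r{5.8}) together with the conformal-class magnetic rigidity result of \cite{St-magnetic} recalled in Appendix~\ref{app}, item (ii). Your additional care about the gauge groups matching and the factors $\lambda,\hat\lambda$ dropping out is exactly the content of Definition~\ref{def_gauge} and section~\ref{sec_gauge}, so nothing is missing.
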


For the definition of the class $\mathcal{G}^k$ used in next theorem, we refer to Definition~\ref{def_G}. 
\begin{corollary}[Generic local rigidity]
There exists $k\ge k_0$ so that for every $(h_0,\omega_0)\in \mathcal{G}^k$, there exists $\eps>0$ such that for every two simple stationary metrics $g=g_{\lambda,\omega,h}$ and $\hat g= g_{\hat\lambda, \hat\omega, \hat h}$ 
for each of which  $(h,\alpha)$, $(\hat h, \hat \alpha)$ is an $\eps$ close to $(h_0,\alpha_0)$ in $C^k(N)$, we have the following:
\[
\mathcal{S}^\sharp= \hat{\mathcal{S}}^\sharp
\]
implies that $\hat g$ and $g$   are gauge equivalent.
\end{corollary}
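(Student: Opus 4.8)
The plan is to deduce this corollary directly from the main rigidity equivalence theorem (the one just above, relating scattering rigidity of stationary Lorentzian metrics to boundary rigidity of magnetic systems via $\mathbb{A}$), combined with the generic local boundary rigidity result for magnetic systems in \cite{St-magnetic}. By Theorem~\ref{thm_mag}, the scattering data $\mathcal S$ of a simple stationary metric $g=g_{\lambda,\omega,h}$ is equivalent to the boundary action function $\mathbb A$ of the magnetic system $(N,h,\omega)$ (after normalizing $\lambda=1$, which is harmless since conformal changes preserve $\mathcal S$). So $\mathcal S=\hat{\mathcal S}$ translates into $\mathbb A=\hat{\mathbb A}$ for the two magnetic systems built from $(h,\omega)$ and $(\hat h,\hat\omega)$.

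The next step is to invoke the generic local rigidity theorem for magnetic systems from \cite{St-magnetic}: there is an integer $k\ge k_0$ and, for each magnetic system $(h_0,\omega_0)$ in the generic class $\mathcal{G}^k$ (see Definition~\ref{def_G}), a number $\eps>0$ such that any two magnetic systems $(h,\omega)$, $(\hat h,\hat\omega)$ that are $\eps$-close to $(h_0,\omega_0)$ in $C^k(N)$ and have the same boundary action function $\mathbb A=\hat{\mathbb A}$ are magnetically gauge equivalent, i.e.\ $\hat h=\psi^*h$ and $\hat\omega=\psi^*(\omega+\d\phi)$ for some diffeomorphism $\psi$ fixing $\partial N$ and some $\phi$ vanishing on $\partial N$. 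I would pick exactly this $k$ and this $\eps$ for the statement of the corollary (matching the hypothesis that $(h,\alpha)$ and $(\hat h,\hat\alpha)$ are $\eps$-close to $(h_0,\alpha_0)$ in $C^k(N)$; here $\alpha$ plays the role of $\omega$). Then the conclusion of the magnetic theorem is precisely the definition of the gauge group in Definition~\ref{def_gauge}, so $g=g_{\lambda,\omega,h}$ and $\hat g=g_{\hat\lambda,\hat\omega,\hat h}$ are gauge equivalent in the sense of that definition.

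One should be slightly careful about two bookkeeping points. First, the notion of ``simple'' for a stationary Lorentzian metric must be arranged to correspond to the simplicity (strict convexity plus absence of conjugate points plus non-trapping) of the associated magnetic system on the base; this is implicit in Theorem~\ref{thm_mag}, whose hypotheses already assume a strictly convex boundary, and we should note that for $(h,\omega)$ close enough to $(h_0,\omega_0)$ simplicity is an open condition, so shrinking $\eps$ if necessary keeps all nearby systems magnetically simple. Second, the conformal factors $\lambda$, $\hat\lambda$ play no role: they are quotiented out at the outset, so the gauge equivalence of Definition~\ref{def_gauge} (which only involves $\omega$, $h$ up to $\psi$ and $\d\phi$) is exactly what we obtain, and correspondingly $g$ and $\hat g$ are gauge equivalent as stationary metrics.

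The main obstacle is not in this corollary itself but lies entirely in the cited magnetic result: the proof that generically (in a $C^k$-dense, $C^2$-open class $\mathcal{G}^k$ of magnetic systems) the boundary action function determines the system up to magnetic gauge, which rests on the analysis of the linearized problem (injectivity up to potential terms of a weighted geodesic/magnetic ray transform) together with a perturbation argument handling the nonlinearity and an implicit-function/foliation scheme — all carried out in \cite{St-magnetic}. Here we simply transport that theorem through the dictionary of Theorem~\ref{thm_mag}; the only genuine work on our side is checking that ``same $\mathcal S$'' $\Longleftrightarrow$ ``same $\mathbb A$'' and that ``gauge equivalent stationary metrics'' $\Longleftrightarrow$ ``magnetically gauge equivalent systems'', both of which are already established above.
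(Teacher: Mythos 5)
Your proposal is correct and follows essentially the same route as the paper: the paper states this corollary as a direct consequence of the equivalence $\mathcal S \Leftrightarrow \mathbb A$ from Theorem~\ref{thm_mag}(c) together with the magnetic generic local rigidity result \cite[Theorem~6.5]{St-magnetic} (Theorem~\ref{thm_gen_m_nl} in the appendix), with the conformal factors $\lambda$, $\hat\lambda$ removed at the outset by the conformal invariance of the lightlike scattering relation. Your added bookkeeping remarks (openness of simplicity, the $\alpha$ versus $\omega$ notation, and matching Definition~\ref{def_gauge} with magnetic gauge equivalence) are consistent with the paper's intent and fill in exactly the details the paper leaves implicit.
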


\subsection{The defining function of $\Sigma$ for stationary metrics} 
Although we did not need to resort to a defining function $r$ in the stationary case, it would be interesting to see how the general theory developed in section~\ref{sec2} applies here. 
We will use the energy, see \r{7} as a definition of $r$. For $(x,y)$ close to some $(x_0,y_0)\in \Sigma$, we have $|r(x,y)|\ll1$, on the other hand, $r=-k^2+m^2$, where $k$ and $m$ are as in \r{5.5a}, \r{5.5b}, and the remarks after it, related to the geodesic $\gamma_{[x,y]}(\sigma)$ parameterized by $\sigma\in [0,1]$. Since $|\dot x|_h=m$, we have $m=\ell_{x,y}$. Integrating   \r{5.5a} along $\gamma$, we get
\[
k=(s-t) +\int_{\gamma_{[x,y]}} \omega .
\]
Therefore,
\[
r(t,x,s,y) = \ell_{x,y}^2 - \Big( (s-t) +\int_{\gamma_{[x,y]}} \omega \Big)^2. 
\]
Up to an elliptic factor, we can replace $r$ by the defining function
\[
r_1(t,x,s,y) = -  (s-t)+ \ell_{x,y} -\int_{\gamma_{[x,y]}} \omega =   t-s + \mathbb{A}(x,y). 
\]
This is a direct generalization of what we got in Example~\ref{ex1} for $g=-\d t^2+h(x,\d x)$. 

A linearization of $r_1$, by \cite[Lemma~3.1]{St-magnetic}, is given by
\be{5.9}
\frac12 \int \langle \delta h, \dot\gamma\oplus \dot\gamma\rangle -\int_\gamma\delta\omega
\ee
with $\gamma$ parameterized by an arc-length parameter, i.e., $|\dot\gamma|_h=1$. 
We will compare it with the linearization \r{thm1_1} in Theorem~\ref{thm1}(c). By \r{thm1_1}, we should get
\be{5.9a}
\int_0^1 \left\langle f, \dot \gamma_{[x,y]} (\sigma)\otimes  \dot \gamma_{[x,y]} (\sigma)\right\rangle\,\d \sigma\quad \text{on $\Sigma= \{r(x,y)=0\}$},
\ee
where $f=\delta g$ with $g$ as in \r{5.3x}. Therefore, dropping the subscript $[x,y]$, we get
\be{5.10}
\left\langle f, \dot \gamma  (\sigma)\otimes  \dot \gamma  (\sigma)\right\rangle 
= -2(\dot\gamma_t + \langle \omega, \dot\gamma_x\rangle)\langle \delta\omega, \dot\gamma_x\rangle + \langle \delta h, \dot\gamma_x\otimes \dot\gamma_x\rangle,
\ee
where the (new) subscripts $t$ and $x$ denote the time and the spatial components of $\gamma$, respectively. 
By \r{5.5a}, \r{5.5b} or \r{5.3xx}, we have $\dot\gamma_t + \langle \omega, \dot\gamma_x\rangle= |\dot\gamma_x|$ for $\gamma$ future pointing. In \r{5.10}, $\gamma(\sigma)$ is parameterized by $\sigma\in [0,1]$, thus $|\dot\gamma_{x,y]}|=\ell_{x,y}$. The rescaling $\tilde \sigma = \ell_{x,y}\sigma$ makes $\tilde\sigma$ an arc-length parameter. Doing this in \r{5.10}, we see that it equals $2\ell_{x,y}^2$ times \r{5.9}. Therefore, the linearization \r{5.9} for the magnetic rigidity problem obtained in \cite{St-magnetic} coincides with the linearization \r{5.9a}, predicted by Theorem~\ref{thm1}(c).

\appendix
\section{Some facts about magnetic systems} \label{app}
We recall some notions and results in \cite{St-magnetic}. On a Riemannian manifold $(N,h)$, we are given a closed two-form $\Omega$, which in our case would be $\Omega=\d\omega$. We define $Y:TN\to TN$ by $\langle \Omega, u\otimes v\rangle = (Yu,v)_h$. Then we consider the Newton-like equation
\[
D_s \dot\gamma = Y\dot\gamma.
\]
The solution curves $\gamma(\sigma)$ are called magnetic geodesics. An easy calculation yields $|\dot \gamma|=\text{const.}$ along each geodesic. Choosing different values of that constant generates different curves; and we fix $|\dot \gamma|=1$. Time   is not reversible along $\gamma$ unless $\Omega=0$. 

We call $N$ simple with respect to $h$ and $\Omega$ of the magnetic exponential map at every point $x$ is a diffeomorphism to $N$ from its pre-image, and if $\partial N$ is strictly convex with respect to the magnetic flow in either direction. Then $\Omega=\d\omega$ with some one-form $\omega$. 
We view $(N,h,\omega)$ as a magnetic system. 

For every pair $(x,y)\in N\times N$, one defines the action $\mathbb{A}(x,y)$ by
\be{A1}
\mathbb{A}(x,y) = \ell_{x,y}-\int_{\gamma_{[x,y]}}\omega,
\ee
where $\gamma_{[x,y]}$ is the unique (by simplicity) unit speed magnetic geodesic from $x$ to $y$, and $\ell_{x,y}$ is the travel time. The action minimizes a certain time-free action functional. Restricted to $\partial N\times\partial N$, $\mathbb{A}$ is called the boundary action function. It plays the role, and generalizes  of the boundary distance function when $\omega=0$. 
Two magnetic systems $(N,h,\omega)$  and $(N,\hat h,\hat \omega)$ are called gauge equivalent if there exists a diffeomorphism $\psi$ on $N$ fixing $\partial N$ pointwise, and a function $\phi$ vanishing on $\partial N$, so that $\hat h = \psi^*h$, and $\hat \omega = \psi^*\omega+\d\phi$. Gauge equivalent magnetic systems have the same boundary action functions. 

One defines the scattering relation $\mathcal{S}_\text{mag}$, and the travel time $\ell_\text{mag}$ in the same was as we did in the Riemannian case. With the notation $\mathcal{S}_\text{mag}(x,v')=(y,w')$, we have the following generalization of \r{1}:
\be{A2}
v'=-\d'_x\mathbb{A}(x,y)+\omega'(x), \quad w'=\d'_y\mathbb{A}(x,y)+\omega'(y),
\ee
where, as before, primes denote tangential projections, and in particular, $\omega'$ is $\omega$ restricted to $T\partial N$. 

\subsection{Linearization} A linearization of $\mathbb{A}(x,y)$ is the following X-ray transform
\be{A3}
I[ f, \beta](\gamma) =   \int \langle  f, \dot\gamma\otimes \dot\gamma\rangle+ \int \beta,
\ee
see \cite[Lemma~3.1]{St-magnetic}, where $f$ is a symmetric two-cotensor field, and $\beta$ is an one-form, which play roles of perturbations of the background metric $h$ (multiplied by $1/2$), and form $-\omega$.  Lift $Y$ to the cotangent bundle by $(Y\omega)_i = -{Y_i}^j \omega_j$, which corresponds to the isomorphism between $TN$ and $T^*N$. 

Based on the obvious gauge invariance of the nonlinear problem, after linearization, 
we expect $I[f,\beta]=0$ to imply $h=\d^s v$ with $v=0$ on $\partial N$, and  $\beta= \d\phi - Yv$ with $\phi$ a function vanishing on $\partial N$. We called this property s-injectivity. 
We proved s-injectivity for simple magnetic systems in the following cases

\begin{itemize}

\item[(i)] with an explicit bound of the curvature, following the energy method going back to Mukhometov, Romanov, Pestov and Sharafutdinov; 

\item[(ii)] in a given conformal class, 

\item[(iii)] for analytic ones using analytic microlocal analysis,  

\item[(iv)] locally, near generic ones using the analytic result. 
\end{itemize}

To describe the latter, we need the following definition.

\begin{definition}\label{def_G}
We define $\mathcal{G}^k$ to be the set of all simple $C^k$ pairs $(h,\omega)$ on $N$ with an s-injective magnetic ray transform $I_{h,\omega}$.
\end{definition}

We can define a $C^k$ topology on $N$ (independent of a metric $h$ which we eventually impose) by taking and fixing a finite atlas of local maps.

\begin{theorem}[\!\!{\cite[Theorem~4.11]{St-magnetic}}]\label{thm_gen_m}
For some $k_0>0$, for every $k\ge k_0$,  the set $\mathcal{G}^k$  is open and dense in the set of all $C^k$ pairs $(h,\beta)$ and contains all real analytic simple pairs. 
\end{theorem}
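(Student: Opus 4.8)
This statement is \cite[Theorem~4.11]{St-magnetic}; the plan is to follow the Stefanov--Uhlmann genericity scheme for the geodesic X-ray transform, adapted to magnetic systems. It splits into three parts: (1) s-injectivity of $I_{h,\omega}$ for every real-analytic simple pair; (2) openness of $\mathcal{G}^k$ among the $C^k$ simple pairs; (3) density of $\mathcal{G}^k$, which will follow formally from (1) and (2).

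For part (1), I would embed the given simple system $(N,h,\omega)$ in a slightly larger simple magnetic system $(N_1,h,\omega)$ and study the normal operator $\mathcal{N}=I^*I$ --- with a suitable smooth measure and a matrix weight accommodating the one-form slot $\beta$ in \r{A3} --- acting on pairs $(f,\beta)$ supported in $N$. Using non-conjugacy along magnetic geodesics one builds, by the usual stationary-phase computation, a parametrix showing that $\mathcal{N}$ is a classical $\Psi$DO of order $-1$ on $N_1^{\mathrm{int}}$ whose principal symbol is elliptic on the solenoidal part, i.e.\ modulo the gauge $(\d^s v,\,\d\phi-Yv)$. For real-analytic $h,\omega$ the construction goes through with analytic symbols, so $\mathcal{N}$ becomes an analytic elliptic $\Psi$DO on solenoidal pairs. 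Then, given $I[f,\beta]=0$, I would fix the gauge making $(f,\beta)$ solenoidal, note $\mathcal{N}(f,\beta)=0$, invoke analytic hypoellipticity to get $\WFA(f,\beta)=\emptyset$ in $N_1^{\mathrm{int}}$, and conclude $(f,\beta)\equiv 0$ from the fact that $(f,\beta)$ vanishes on the open set $N_1\setminus N$. The delicate point is that the solenoidal splitting is itself a $\Psi$DO and is not compatible with extension by zero across $\partial N$, so this last step really requires a boundary layer-stripping / unique-continuation argument exploiting the short magnetic geodesics staying near $\partial N$.

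For part (2), starting from $(h_0,\omega_0)\in\mathcal{G}^k$, the same parametrix yields a left parametrix $P$ with $P\mathcal{N}_0=\Id+K_0$ on solenoidal pairs, $K_0$ compact; combined with s-injectivity and a compactness argument this upgrades to an a priori stability bound of the shape $\|(f,\beta)^s\|_{L^2}\le C\,\|\mathcal{N}_0(f,\beta)\|_{H^1}$. Since $\mathcal{N}_{h,\omega}$, the parametrix $P$, and the solenoidal projection all depend continuously on $(h,\omega)$ in $C^k$ for $k$ large --- $k_0$ being dictated by the number of metric derivatives entering the $\Psi$DO calculus and the Fredholm bookkeeping --- the estimate persists, with a slightly worse constant, on a $C^k$-neighborhood of $(h_0,\omega_0)$, forcing s-injectivity there; hence $\mathcal{G}^k$ is open. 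For part (3) I would use that real-analytic simple pairs are $C^k$-dense (simplicity being an open $C^k$ condition, one can perturb any $C^k$ simple pair to a nearby analytic simple one) and that by part (1) they all lie in $\mathcal{G}^k$, so $\mathcal{G}^k$ is dense; this also gives the last assertion of the theorem. I expect part (1) --- the analytic-microlocal ellipticity of $\mathcal{N}$ on the solenoidal part, together with the handling of the gauge and the non-reversibility of the magnetic flow in both the microlocal splitting and the boundary continuation --- to be the main obstacle; parts (2) and (3) are then essentially functional-analytic bookkeeping.
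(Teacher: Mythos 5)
The paper does not prove this statement at all---it is quoted directly from \cite[Theorem~4.11]{St-magnetic} (Dairbekov--Paternain--Stefanov--Uhlmann), so there is no internal proof to compare against, only the proof in that reference. Your sketch follows essentially the same scheme used there: s-injectivity for real-analytic simple pairs via the analytic $\Psi$DO/analytic wavefront set argument for the normal operator (with the boundary and gauge subtleties you flag), openness from a stability estimate for $\mathcal N$ on solenoidal pairs that persists under $C^k$-perturbations, and density via approximation by analytic simple pairs---so your proposal matches the known approach.
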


The magnetic ray transform is elliptic on the complement of the potential pairs, which allows for a stability estimate, which in turn allows to apply this to the nonlinear problem. 

\subsection{Rigidity results} We sketch the rigidity results about simple magnetic system obtained in \cite{St-magnetic}. We proved boundary determination of the whole jet of $h$ and $\omega$, up to the gauge, first. Next, we showed the following results. 

\begin{itemize}
\item[(i)] Two-dimensional (simple) magnetic systems are boundary rigid. This was derived generalizing the Riemannian result by Pestov and Uhlmann \cite{PestovU}, without a linearization. 

\item[(ii)] If $\hat g = \mu g$ with $\mu>0$ a function, then equality of the lens data implies $\mu=1$ and $\hat\omega$ is gauge equivalent to $\omega$. 

\item[(iii)] Real analytic simple magnetic systems with the same lens data are gauge equivalent. This follows from a boundary determination of the jets of $h$ and $\omega$, and then by analytic continuation. 

\item[(iv)] Generic local rigidity near simple magnetic systems with s-injective linearizations, following \cite{SU-JAMS}. 
  
\end{itemize}

Recovery of a conformal factor and  $\d\omega$ from local data near a strictly convex boundary point, and a global result under a foliation condition was proved in \cite{Hanming-18}. This requires knowledge of the action $\mathbb{A}$ \textit{and} $\ell$. 

We formulate (iv) in the following.

\begin{theorem}[\!{\cite[Theorem~6.5]{St-magnetic}}]\label{thm_gen_m_nl} 
There exists $k\ge k_0$ so that for every $(h_0,\omega_0)\in \mathcal{G}^k$, there exists $\eps>0$ such that for every two magnetic systems $(h,\omega)$, $(\hat h, \hat \omega)$, each of which is an $\eps$ close to $(h_0,\omega_0)$ in $C^k(N)$, we have the following:
\[
\hat{\mathbb{A}}= \mathbb {A}\quad \text{on $\partial N\times\partial N$}
\]
implies that $(\hat h, \hat\omega)$ and $(h,\omega)$   are gauge equivalent. 
\end{theorem}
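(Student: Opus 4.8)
The plan is to follow the Stefanov--Uhlmann strategy for generic local rigidity (as in \cite{SU-JAMS}), transplanted to magnetic systems: prove a uniform stability estimate for the linearized problem near $(h_0,\omega_0)$, reduce both systems to a canonical gauge, and then use the estimate to absorb the nonlinear remainder.

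First I would normalize the gauge. Given a simple magnetic system $(h,\omega)$ that is $C^k$-close to $(h_0,\omega_0)$, I would pick a distinguished representative of its gauge class: a diffeomorphism $\psi$ of $N$ fixing $\partial N$ pointwise that puts $h-h_0$ into the solenoidal gauge relative to $h_0$, followed by adding an exact form $\d\phi$ with $\phi|_{\partial N}=0$ so that $\omega$ lands in the corresponding normalized slice. Solvability of the elliptic solenoidal-gauge equation together with the implicit function theorem makes this possible for systems close to $(h_0,\omega_0)$, with $(\psi,\phi)$ depending continuously on the system. Since $\mathbb A$ is a gauge invariant, the hypothesis $\hat{\mathbb A}=\mathbb A$ survives this reduction, and it is enough to show that two normalized systems with the same boundary action function coincide. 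Invoking the boundary determination result from \cite{St-magnetic}, $\mathbb A$ near the diagonal determines the full jets of $(h,\omega)$ at $\partial N$ up to gauge, so after a further gauge transformation equal to the identity to infinite order at $\partial N$ I may assume $f:=\hat h-h$ and $\beta:=-(\hat\omega-\omega)$ vanish to infinite order at $\partial N$.

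Next I would convert the nonlinear identity into a linear one. Interpolating linearly, $(h_\sigma,\omega_\sigma):=(h+\sigma f,\ \omega-\sigma\beta)$ for $\sigma\in[0,1]$, one writes
\[
0=\hat{\mathbb A}-\mathbb A=\int_0^1\frac{\d}{\d\sigma}\,\mathbb A_{h_\sigma,\omega_\sigma}\,\d\sigma=\mathcal I[f,\beta],
\]
where, applying \cite[Lemma~3.1]{St-magnetic} at each $\sigma$, $\mathcal I$ is a $\sigma$-averaged magnetic ray transform taken along the magnetic geodesics of the intermediate systems; its principal part is $I_{h_0,\omega_0}$ modulo an operator that is small in the relevant operator norm once the systems are $C^k$-close to $(h_0,\omega_0)$. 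The analytic heart of the matter is the stability estimate: since $(h_0,\omega_0)\in\mathcal G^k$, the transform $I_{h_0,\omega_0}$ is s-injective, and, because $I$ is elliptic off the potential pairs, the Fredholm/microlocal machinery of \cite{St-magnetic} (cf.\ Theorem~\ref{thm_gen_m} and the discussion after it) yields $\|f^{s}\|_{L^2}\le C\,\|I_{h,\omega}[f,\beta]\|_{H^1}$ uniformly for $(h,\omega)$ in a $C^k$ neighborhood of $(h_0,\omega_0)$, where $f^s$ is the solenoidal part and the potential part of $f$ together with the $\d\phi-Yv$ part of $\beta$ are pinned down by the (trivial) boundary data. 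The estimate persists for the small perturbation $\mathcal I$ of $I_{h_0,\omega_0}$; feeding the displayed identity into it forces $f^{s}=0$ and $\beta=\d\phi-Yv$, i.e.\ $(\hat h,\hat\omega)$ and $(h,\omega)$ are gauge equivalent, as claimed.

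The step I expect to be the main obstacle is making everything uniform and keeping the nonlinear remainder genuinely subordinate: one must check that the gauge-normalization map is well defined and continuous on a full $C^k$ neighborhood (not just near a single system), that the boundary-jet determination can be upgraded to a gauge making $f,\beta$ flat at $\partial N$ without spoiling the normalization, and---most delicately---that $\mathcal I-I_{h_0,\omega_0}$ is small in an operator norm between exactly the spaces in which the stability estimate holds. This last point forces one to track the derivative loss in $I$ and the dependence of the magnetic exponential map on $(h,\omega)$, and it dictates how large $k$ must be chosen; getting this bookkeeping right, rather than any single conceptual ingredient, is the crux.
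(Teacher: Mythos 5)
You should first note that the paper you are working from does not prove this statement at all: it is imported verbatim as \cite[Theorem~6.5]{St-magnetic}, and the appendix only records it. So the only meaningful comparison is with the proof in that reference, and your outline is essentially the strategy used there, namely the Stefanov--Uhlmann scheme of \cite{SU-JAMS} adapted to magnetic systems: normalize the gauge (solenoidal gauge for the metric part via an elliptic problem and the implicit function theorem, plus an exact form correction for $\omega$), use boundary determination of the jets of $(h,\omega)$ up to gauge to make the differences flat at $\partial N$, convert $\hat{\mathbb A}=\mathbb A$ into a linear-transform identity for $(f,\beta)$, and close the argument with a stability estimate for the linearized transform that is uniform on a $C^k$-neighborhood of the s-injective pair $(h_0,\omega_0)$. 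One small methodological difference: in \cite{St-magnetic} (as in \cite{SU-JAMS}) the nonlinear identity is handled as the transform of $(f,\beta)$ in the fixed nearby system plus a remainder that is quadratically small in $(f,\beta)$, rather than your exact $\sigma$-averaged transform along the interpolated systems; your variant is workable but then requires simplicity of all intermediate systems and a uniform estimate for the averaged operator, which is the same kind of perturbation argument in the end.

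Be aware, though, that as a standalone argument your text is a strategy sketch rather than a proof: the two ingredients that carry all the weight --- the stability estimate $\|f^s\|\le C\|I_{h,\omega}[f,\beta]\|$ in the correct (derivative-losing) norms, uniform over a $C^k$-neighborhood of $(h_0,\omega_0)\in\mathcal G^k$, and the operator-norm smallness of the perturbation in exactly those spaces --- are invoked rather than established, and they constitute most of \cite{St-magnetic}. You correctly flag this as the crux, and since the theorem is quoted here precisely so as not to reprove it, that reliance is acceptable in context; just do not present the sketch as a self-contained proof.
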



\end{document}